\definecolor{pink}{rgb}{0.858, 0.188, 0.478}
\theoremstyle{plain}
\newtheorem{thm}{Theorem}[section]
\newtheorem{theorem}{Theorem}
\newtheorem{cor}[thm]{Corollary}
\newtheorem{prop}[thm]{Proposition}
\newtheorem{lemma}[thm]{Lemma}
\theoremstyle{definition}
\newtheorem{remark}[thm]{Remark}
 \newcommand{\AAA}{\ensuremath{{\mathbb{A}}}}
 \newcommand{\BB}{\ensuremath{{\mathbb{B}}}}
 \newcommand{\LL}{\ensuremath{{\mathbb{L}}}}
 \newcommand{\FF}{\ensuremath{{\mathbb{F}}}}
\newcommand{\KK}{\ensuremath{{\mathbb K}}}
\newcommand{\QQ}{\ensuremath{\mathbb{Q}}}
\newcommand{\GG}{\ensuremath{\mathbb{G}}}
\newcommand{\ZZ}{\ensuremath{\mathbb{Z}}}
\newcommand{\GL}{\text{GL}}
\newtheorem*{rep@theorem}{\rep@title}
\newcommand{\newreptheorem}[2]{%
\newenvironment{rep#1}[1]{%
 \def\rep@title{#2 \ref{##1}}%
 \begin{rep@theorem}}%
 {\end{rep@theorem}}}
\newcounter{nootje}
\begin{document}
\title[Lower bounds for heights in relative Galois extensions]{Lower bounds for heights in relative Galois extensions}
\author{Shabnam Akhtari}
\address{Department of Mathematics, University of Oregon, Eugene, Oregon 97402 USA}
\email{akhtari@uoregon.edu}
\author{Kevser Akta\c{s}}
\address{Department of Mathematics Education, Gazi University, Ankara, 06500, Turkey}
\email{kevseraktas@gazi.edu.tr}
\author{Kirsti Biggs}
\address{School of Mathematics, University of Bristol, University Walk, Clifton, Bristol, BS8 1TW, United Kingdom}
\email{kirsti.biggs@bristol.ac.uk}
\author{Alia Hamieh}
\address{Department of Mathematics and Computer Science, University of Lethbridge, Lethbridge, Alberta, T1K3M4 Canada}
\email{alia.hamieh@uleth.ca}
\author{Kathleen Petersen}
\address{Department of Mathematics, Florida State University, Tallahassee, Florida 32306 USA}
\email{petersen@math.fsu.edu}
\author{Lola Thompson}
\address{Department of Mathematics, Oberlin College, Oberlin, OH 44074 USA}
\email{lola.thompson@oberlin.edu}

\begin{abstract}

The goal of this paper is to obtain lower bounds on the height of an algebraic number in a relative setting, extending previous work of Amoroso and Masser.  Specifically, in our first theorem we obtain an effective bound for the height of an algebraic number $\alpha$ when the base field $\mathbb{K}$ is a number field and $\mathbb{K}(\alpha)/\mathbb{K}$ is Galois.  Our second result establishes an explicit height bound for any non-zero element  $\alpha$ which is not a root of unity in a Galois extension $\FF/\KK$, depending on the degree of $\KK/\QQ$ and the number of conjugates of $\alpha$ which are multiplicatively independent over $\KK$. As a consequence, we obtain a height bound for such $\alpha$ that is independent of the multiplicative independence condition.

\end{abstract}

\subjclass[2010]{11G50}
\keywords{}

\maketitle

\section{Introduction}

Consider the non-constant polynomial
\[ P(x)=c_dx^d+c_{d-1}x^{d-1}+\dots + c_1x+c_0=  c\prod_{i=1}^d (x-r_i).\]
The Mahler measure of $P(x)$ is defined as
\[ M(P) = \exp\Big(  \int_0^1 \log | P(e^{2\pi i t})| \; dt \Big),\]
the geometric mean of $|P(z)|$ for $z$ on the unit circle.
By Jensen's formula, this is equivalent to 
\[  M(P) = |c| \prod_{|r_i|\geq 1} |r_i|.\]
If $P(x)$ has integer coefficients, then $M(P)\geq 1$;  by a result of Kronecker, $M(P)=1$ 
exactly when $P(x)$ is a power of $x$ times a product of cyclotomic polynomials.

Given an algebraic number $\alpha$, we let $d= [\QQ(\alpha):\QQ]$ be its degree over $\QQ$. We will use $M(\alpha)$ to denote the Mahler measure of the minimal polynomial of $\alpha$ over $\ZZ$. We will formulate our results in terms of the  Weil height of $\alpha$, defined to be
\[h(\alpha) =\tfrac1d \log M(\alpha).\]

In 1933 Lehmer asked whether there are monic integer polynomials whose Mahler measure is arbitrarily close to 1. For the polynomial $L(x) = x^{10} +x^9 -x^7 -x^6 -x^5 -x^4 -x^3 +x+1$ (now called Lehmer's polynomial), he calculated 
$ M(L) = 1.176280818 \dots,$
which is still the smallest value of $M(P) > 1$ known for $P\in \ZZ[x]$. Although he did not make a conjecture, the statement that there exists a constant $\delta>0$ such that the Mahler measure of any polynomial in $\ZZ[x]$ is either 1 or is greater than $1+\delta$ has become known as Lehmer's conjecture. In terms of height, Lehmer's conjecture states that there is a universal constant $c_0>0$ such that if $\alpha$ is a non-zero  algebraic number which is not a root of unity then 
\[ h(\alpha) \geq \frac{c_0}{d}.\]

In 1971 Blanksby and Montgomery \cite{MR0296021} and later Stewart \cite{MR507748} produced bounds for the Mahler measure of such algebraic numbers.  These bounds inspired the work of Dobrowolski \cite{MR543210} who, in 1979, proved for $d\geq 2$ that 
\[ M(\alpha) > 1+ \frac1{1200} \Big( \frac{\log \log d}{\log d}\Big)^3.\]
 Many of the best bounds are modifications of Dobrowolski's bound.  The constants in these bounds have been improved over the years, but the dependence on the degree (for general polynomials) has remained.
Of note, in 1996 Voutier \cite{MR1367580} used elementary techniques to show that for $d\geq2$, we have
\begin{equation}\label{Vouineq}
 h(\alpha) >\frac1{4d} \Big( \frac{\log \log d}{\log d}   \Big)^3.
 \end{equation}
 (Dobrowolski's bound, when translated into a statement about Weil height, has a similar form.) Voutier also showed that for $d\geq2$, we have \begin{equation}\label{Vouineq1}h(\alpha) >  \frac{2}{d\left(\log 3d\right)^3}, \end{equation}
 which gives a better lower bound than \eqref{Vouineq} for small values of $d$. For more details on the history of Lehmer's conjecture and related problems, see the excellent survey paper of Smyth \cite{MR2428530}.

Lehmer's conjecture has been proven in certain settings. Notably, Breusch \cite{MR0045246} and Smyth \cite{MR0289451} independently proved it for non-reciprocal polynomials. More recently,  Borwein, Dobrowolski and Mossinghoff \cite{MR2373144} proved it for many infinite families of polynomials, including polynomials with no cyclotomic factors and all odd coefficients. (Their result therefore proves Lehmer's conjecture for the Littlewood polynomials, namely those polynomials whose coefficients are $\pm 1$.) 

Results also exist concerning height bounds for $\alpha$ with certain properties. For example, Amoroso and David \cite{MR1713323} have proven that there is an absolute constant $c$ such that if $\QQ(\alpha)/\QQ$ is Galois, and $\alpha$ is not a root of unity, then $h(\alpha)\geq cd^{-1}$. This proves Lehmer's conjecture for such $\alpha$. Moreover, if $\alpha$ is any non-zero algebraic number that lies in an abelian extension of $\QQ$, then Amoroso and Dvornicich \cite{MR1740514} have shown that the height of $\alpha$ is greater than the constant $(\log 5)/12$.

Amoroso and Masser \cite{AmMa} improved upon the bounds in \cite{MR1713323}   for the case where $\QQ(\alpha)/\QQ$ is Galois. They  showed that, for any $\epsilon$,  the height of $\alpha$ is bounded below by $c(\epsilon)d^{-\epsilon}$.  Our first theorem is a generalization of this result to the case when $\alpha$ generates a Galois extension of an arbitrary number field.
\begin{theorem}\label{Thm1}
Let $\epsilon>0$ be given. Let $\alpha$ be a non-zero algebraic number, not a root of unity, such that $[\QQ(\alpha):\QQ]\geq2$ and $\KK(\alpha)/\KK$ is Galois for some number field $\KK$.  Let $\delta$ be the degree of $\alpha$ over $\KK$. 
Then there is an effectively computable constant $c(\epsilon,\KK)>0$ such that 
\[ h(\alpha)\geq c(\epsilon,\KK) \delta^{-2\epsilon}.\]
\end{theorem}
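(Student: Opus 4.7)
The plan is to adapt the transcendence machinery of Amoroso and Masser \cite{AmMa} to the relative setting, replacing the absolute Galois group action by that of $G = \mathrm{Gal}(\KK(\alpha)/\KK)$. The Galois hypothesis guarantees that the $\delta$ conjugates $\alpha = \alpha_1, \ldots, \alpha_\delta$ of $\alpha$ over $\KK$ all lie in $\KK(\alpha)$ and are permuted by $G$; in particular $h(\alpha_i) = h(\alpha)$ for every $i$. The goal is to produce two matching inequalities whose incompatibility, when $h(\alpha)$ is too small, forces the desired lower bound.

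The concrete steps are as follows. First, use Siegel's lemma over $\KK$ (in the form of Bombieri--Vaaler) to construct an auxiliary polynomial $P(x) \in \calO_\KK[x]$ of controlled degree $N$ and bounded logarithmic height, vanishing to a prescribed order at $\alpha^p$ for all rational primes $p$ in a dyadic window $[T, 2T]$; the parameters $N$ and $T$ are tuned as functions of $\delta$ and $\epsilon$ in the style of Dobrowolski. Next, exploit the Galois symmetry together with Frobenius: because $P$ has $\calO_\KK$-coefficients, each $\sigma \in G$ sends $P(\alpha^p) = 0$ to $P(\alpha_i^p) = 0$, and for primes $p$ unramified in $\KK(\alpha)/\KK$ the Frobenius congruence $\alpha^p \equiv \sigma_\mathfrak{p}(\alpha) \pmod{\mathfrak{p}}$ at a prime $\mathfrak{p}$ of $\KK(\alpha)$ above $p$ links $P(\alpha^p)$ arithmetically to the values $P(\alpha_i)$. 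A Liouville-type lower bound for the nonzero norm of the product $\prod_{p,i} P(\alpha_i^p)$ is then compared to an archimedean upper bound coming from the height of $P$ and the absolute values of the $\alpha_i$; the incompatibility of these bounds, solved for $h(\alpha)$, delivers the estimate $h(\alpha) \geq c(\epsilon,\KK)\delta^{-2\epsilon}$.

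The main obstacle is tracking the dependence on $\KK$ throughout the transcendence inequalities. Every step (the Siegel-lemma height bound, the Liouville inequality, the handling of primes ramified in $\KK(\alpha)/\KK$, and the passage between relative and absolute heights) introduces constants depending on $[\KK:\QQ]$, $\log |\mathrm{disc}(\KK)|$, and the regulator or class number of $\KK$; these must be effectively absorbed into $c(\epsilon,\KK)$. The deterioration of the exponent from $\epsilon$ in the absolute theorem of Amoroso--Masser to $2\epsilon$ here is the structural cost of working relatively: it arises because both the upper and the lower bound on $|P(\alpha^p)|$ are each weakened by factors dictated by $[\KK:\QQ]$, so the critical exponent of $\delta$ at which the two inequalities cross is doubled.
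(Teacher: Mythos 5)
Your outline does not match the paper's argument, and more importantly it contains a structural gap: a Dobrowolski-style auxiliary construction (Siegel's lemma, Frobenius congruences at primes $p\in[T,2T]$, Liouville lower bound for a nonzero norm) is intrinsically limited to bounds of the shape $h(\alpha)\gg \frac{1}{d}\left(\frac{\log\log d}{\log d}\right)^{3}$. The loss of a full power of the degree is forced at the Liouville step, since the nonvanishing norm lives in a field of degree at least $\delta$ over $\KK$, and nothing in your outline explains how the Galois hypothesis would convert this into the far stronger exponent $\delta^{-2\epsilon}$; you simply assert that the two inequalities cross at the right place. Your closing heuristic for the factor $2$ in the exponent (degradation of both bounds by factors depending on $[\KK:\QQ]$) is also not what happens: in the actual proof the doubling arises from absorbing logarithmic factors, which costs an extra power of an intermediate degree $D\leq C(\KK,\epsilon)\delta^{\epsilon}$.

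The missing idea is a dichotomy on the multiplicative rank $\rho$ of the conjugates $\alpha_{1},\dots,\alpha_{\delta}$, all of which lie in $\KK(\alpha)$ by the Galois hypothesis (the one place your outline uses that hypothesis correctly). Fix $r$ just above $1/\epsilon$. If $\rho\geq r$, one applies the Amoroso--Viada theorem to $r$ multiplicatively independent conjugates, whose heights all equal $h(\alpha)$, obtaining $h(\alpha)^{r}\geq (\delta\tau)^{-1}(1050r^{5}\log(3\delta\tau))^{-r^{2}(r+1)^{2}}$ and hence $h(\alpha)\gg_{\epsilon,\tau}\delta^{-2\epsilon}$ after taking $r$th roots, since $1/r<\epsilon$. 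If $\rho<r$, then $\mathrm{Gal}$ of the field generated by the $\alpha_{i}^{e}$ over $\KK$ (with $e$ the number of roots of unity in $\KK(\alpha)$) embeds into $\mathrm{GL}_{\rho}(\ZZ)$, so by Serre's rigidity result its order is less than $3^{\rho^{2}}$; a degree computation then shows $D=[\KK(\alpha):\KK(\zeta_{e})]\leq C_{4}(\KK,\epsilon)\delta^{\epsilon}$, and the relative Dobrowolski bound over abelian extensions (Amoroso--Delsinne) applied to the abelian extension $\KK(\zeta_{e})/\KK$ gives $h(\alpha)\gg_{\KK}D^{-2}\gg\delta^{-2\epsilon}$. (The remaining case $\delta<r$ is handled directly by Voutier's explicit absolute bound, since then $[\QQ(\alpha):\QQ]\leq\delta\tau$ is bounded in terms of $\epsilon$ and $\KK$.) Without some version of this rank dichotomy and the two cited black-box inputs, the proposed transcendence construction cannot reach the stated conclusion.
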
 

Relative height bounds for $\alpha$ in a number field $\KK$ which is abelian over $\LL$ are given in \cite{MR1817715} and \cite{MR2349643}.  These bounds are similar in shape to Dobrowolski's bound.

Theorem~\ref{Thm1} determines bounds for $h(\alpha)$ when $\KK(\alpha)/\KK$ is Galois, and therefore when $\QQ(\alpha)/\KK$ is Galois.  Our next theorem  determines height bounds for any element $\alpha$ in a Galois extension $\FF$ of $\KK$ which is non-zero and not a root of unity.  This is a generalization of Theorem 3.1 in \cite{AmMa}.
\begin{theorem}\label{Thm2}
Let $\KK$ be a number field with degree $\tau$ over $\QQ$. For any positive integer $r\geq1$ and any $\epsilon>0$ there is a positive effective constant $c(\epsilon,r,\tau)$ with the following property. Let $\FF/\KK$ be a Galois extension of relative degree $\eta$, and suppose $\alpha\in\FF^{*}$ is not a root of unity. Assume that $r$ conjugates of $\alpha$ over $\KK$ are multiplicatively independent. Then \[h(\alpha)\geq c(\epsilon,r,\tau)\eta^{-\frac{1}{r+1}-\epsilon}.\]
\end{theorem}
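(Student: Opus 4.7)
The plan is to adapt the construction of Amoroso--Masser from the proof of Theorem 3.1 in \cite{AmMa} to the relative setting, replacing the rational primes they use with prime ideals of $\OO_\KK$. Fix $r$ multiplicatively independent conjugates $\alpha_1,\ldots,\alpha_r$ of $\alpha$ over $\KK$ and assume for contradiction that $h(\alpha) < c(\epsilon,r,\tau)\,\eta^{-1/(r+1)-\epsilon}$ for a sufficiently small constant. Introduce positive integer parameters $L$ (a degree) and $T$ (a multiplicity) to be chosen as suitable powers of $\eta$ with exponents summing, after balancing, to give the exponent $1/(r+1)+\epsilon$.

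\textbf{Auxiliary polynomial.} Using an effective form of Siegel's lemma over $\OO_\KK$, construct a nonzero polynomial $P \in \OO_\KK[x_1,\ldots,x_r]$ of partial degrees at most $L$ whose logarithmic height is controlled in terms of $\tau$, $r$, $L$, $T$ and $h(\alpha)$, and which vanishes to multiplicity at least $T$ at the point $(\alpha_1,\ldots,\alpha_r)$. One balances parameters so that the number of monomials, roughly $(L+1)^r$, comfortably exceeds the total number of linear conditions, roughly $\binom{T+r-1}{r}\eta$.

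\textbf{Frobenius extrapolation.} Apply the Chebotarev density theorem in the Galois closure $\widetilde\FF$ of $\FF/\QQ$ to produce a large supply of rational primes $p$ with the following properties: $p$ splits completely in $\KK/\QQ$, giving degree one primes $\mathfrak p$ of $\OO_\KK$; $p$ is unramified in $\FF$; and the corresponding Frobenius $\sigma_\mathfrak p \in \mathrm{Gal}(\FF/\KK)$ satisfies $\sigma_\mathfrak p(\alpha_i) \equiv \alpha_i^p \pmod{\mathfrak P}$ for a prime $\mathfrak P$ of $\FF$ above $\mathfrak p$. Combining the vanishing of $P$ at $(\alpha_1,\ldots,\alpha_r)$ with these congruences promotes, via the usual Frobenius-lifting argument, the vanishing (modulo $\mathfrak P$) of the twisted polynomials $P^{\sigma}$ at the points $(\alpha_1^p,\ldots,\alpha_r^p)$ for a positive density of primes $p$ up to a cutoff $Q$, with $Q$ chosen to be another small power of $\eta$.

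\textbf{Zero estimate and conclusion.} Apply a zero estimate on $\mathbb G_m^r$, in the spirit of Philippon or Amoroso--David, to the collection of vanishing conditions on $P$ and its Frobenius conjugates at all the twisted points $(\alpha_1^p,\ldots,\alpha_r^p)$. Multiplicative independence of $\alpha_1,\ldots,\alpha_r$ rules out the structural alternative in which $(\alpha_1,\ldots,\alpha_r)$ lies in a proper algebraic subgroup, so some $P^{\sigma}$ must be nonzero at some such twisted point. Then a product formula / Liouville estimate bounds $|P^{\sigma}(\alpha_1^p,\ldots,\alpha_r^p)|_v$ from below at a distinguished place $v$ by a quantity incompatible with the archimedean and non-archimedean upper bounds coming from the small height of $P$ and the congruence mod $\mathfrak P$. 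This contradiction yields the asserted lower bound on $h(\alpha)$.

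\textbf{Main obstacle.} The principal difficulty is the quantitative Chebotarev step over $\KK$: one needs an effective supply of degree one primes $\mathfrak p$ of $\OO_\KK$ up to a norm that is a specified power of $\eta$ (not of $\tau\eta$), whose Frobenii in $\mathrm{Gal}(\FF/\KK)$ are in general position relative to multiplicative relations among the $\alpha_i$, and all constants must depend only on $\epsilon,r,\tau$. Properly organizing these conditions is what isolates $\eta$ rather than $[\FF:\QQ]=\tau\eta$ in the final exponent $1/(r+1)$, and is what distinguishes the relative statement from the absolute one in \cite{AmMa}.
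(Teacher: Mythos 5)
Your proposal takes a genuinely different route from the paper, but as sketched it contains a gap that prevents it from reaching the stated exponent. The paper does not redo any transcendence machinery: it quotes the Amoroso--Viada product bound (Proposition \ref{AmVi1.6}) and Delsinne's relative result over $\QQ^{\rm ab}$ (Corollary \ref{cor:Del1.6}) as black boxes, and the entire content of the proof is a dichotomy on the exact multiplicative rank $\rho$ of the conjugates of $\alpha$ over $\KK$. Your worry about isolating $\eta$ rather than $\tau\eta$ in the Chebotarev step is a non-issue, since the constant is allowed to depend on $\tau$ and the paper simply uses $D\leq\tau\eta$ throughout.

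The gap is this: you fix exactly $r$ multiplicatively independent conjugates and run an auxiliary-polynomial/zero-estimate argument in $\GG_m^r$. The best such an argument can produce is a bound of the shape $h(\alpha_1)\cdots h(\alpha_r)\gg D^{-1-\epsilon}$, i.e.\ $h(\alpha)^{r}\gg(\tau\eta)^{-1-\epsilon}$, which gives only $h(\alpha)\gg\eta^{-1/r-\epsilon}$ --- strictly weaker than the claimed $\eta^{-1/(r+1)-\epsilon}$. Nothing in your sketch explains where the improvement from $1/r$ to $1/(r+1)$ comes from. The paper obtains it as follows. If $\rho>r$, one has $r+1$ independent conjugates and Proposition \ref{AmVi1.6} applied to them gives $h(\alpha)^{r+1}\gg(\tau\eta)^{-1-\epsilon}$ directly. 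If $\rho=r$ exactly, one needs a second, independent lower bound to multiply against the rank-$r$ bound: Lemma \ref{lemma:2.1} (via the bound $3^{\rho^2}$ on finite subgroups of $\mathrm{GL}_\rho(\ZZ)$) shows that $\alpha^e$ lies in a subfield $\LL$ with $[\LL:\KK]\leq n(r)<3^{r^2}$, where $e$ is the order of the roots of unity in $\FF$; hence $[\QQ(\alpha^e):\QQ]\leq n(r)\tau$ is bounded independently of $\eta$, and Voutier's inequality gives $h(\alpha)=\tfrac1e h(\alpha^e)\gg_{r,\tau}1/e$. Multiplying this against the Delsinne bound $h(\alpha)^r\gg\phi(e)\,(\tau\eta)^{-1}(\log 3\tau\eta)^{-\kappa_2(r)}$ produces $h(\alpha)^{r+1}\gg\tfrac{\phi(e)}{e}\,\eta^{-1-\epsilon}$, and $\phi(e)^{1+\epsilon}/e$ is bounded below by Lemma \ref{lem:phi}. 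Without this rank dichotomy and the structural input of Lemma \ref{lemma:2.1}, your approach cannot close the gap between the exponents $1/r$ and $1/(r+1)$; with it, the heavy analytic machinery you propose to rebuild is unnecessary.
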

Theorem~\ref{Thm2} is proven in Section~\ref{ExplicitSection}, where the explicit constants are presented. Taking $r=1$, we have the following corollary as an immediate consequence.

\begin{cor}\label{corollary:Thm2}
{\em
For any $\epsilon>0$ there is a positive effective constant $c(\epsilon,\tau)$ with the following property. Let $\FF/\KK$ be a Galois extension, with $ [\FF: \KK] = \eta$, and suppose $\alpha\in\FF^{*}$ is not a root of unity. Then \[h(\alpha)\geq c(\epsilon,\tau)\eta^{-\frac{1}{2}-\epsilon}.\]}
\end{cor}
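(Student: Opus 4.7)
The plan is simply to invoke Theorem~\ref{Thm2} with the parameter choice $r = 1$, and to observe that in this case the multiplicative independence hypothesis becomes automatic. Recall that a set of $r$ elements of $\FF^{*}$ is multiplicatively independent if no non-trivial integer linear combination of them (in the multiplicatively written group) equals $1$; for a set of size $r = 1$, this degenerates to the condition that the single element is of infinite order in $\FF^{*}$, i.e.\ that it is not a root of unity.

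Given $\alpha \in \FF^{*}$ as in the corollary, $\alpha$ is not a root of unity by hypothesis, so the singleton $\{\alpha\}$ is a set of $r = 1$ conjugates of $\alpha$ over $\KK$ (the identity conjugate) that is multiplicatively independent. Thus all hypotheses of Theorem~\ref{Thm2} are met with $r = 1$, and the theorem produces a positive effective constant $c(\epsilon, 1, \tau)$ such that
\[
h(\alpha) \;\geq\; c(\epsilon, 1, \tau)\, \eta^{-\frac{1}{1+1} - \epsilon} \;=\; c(\epsilon, 1, \tau)\, \eta^{-\frac{1}{2} - \epsilon}.
\]
Setting $c(\epsilon, \tau) := c(\epsilon, 1, \tau)$ yields the statement of the corollary.

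There is no real obstacle here; the only conceptual point worth double-checking is that the $r = 1$ specialization of ``multiplicatively independent conjugates'' coincides precisely with the hypothesis ``not a root of unity'' built into the corollary, so that no extra assumption on $\alpha$ is required. The effectivity of the constant $c(\epsilon, \tau)$ is inherited verbatim from the effectivity of $c(\epsilon, r, \tau)$ in Theorem~\ref{Thm2}, and any explicit formula for the latter that emerges from Section~\ref{ExplicitSection} immediately specializes to an explicit formula here by plugging in $r = 1$.
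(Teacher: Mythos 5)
Your proof is correct and is exactly the paper's argument: the corollary is stated as the immediate specialization of Theorem~\ref{Thm2} to $r=1$, where the multiplicative independence hypothesis reduces to ``$\alpha$ is not a root of unity.'' Your explicit check of that reduction is the only point that needs saying, and you say it.
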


The present paper closely follows and builds on the work of Amoroso and Masser in \cite{AmMa}.

\section{Preliminaries}

In this section, we collect results that will be used in the proofs of Theorem~\ref{Thm1} and Theorem~\ref{Thm2}. 

\subsection{Finite linear groups}

We will require a bound on the size of finite subgroups of $\GL_n(\ZZ)$ in the proof of Lemma \ref{lemma:2.1}.   We now establish this bound, following the work of Serre \cite{Ser}.

\begin{prop}[Serre]\label{thm:serre}
Let $A$ be an abelian variety, and let $u$ be an automorphism of $A$ of finite order. Let $n\geq2$ be a positive integer such that $u\equiv 1\mod n$. If $n=2$, then $u^2=1$. Otherwise, we have $u=1$. 
\end{prop}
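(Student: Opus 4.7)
The plan is to reduce the statement to the classical Minkowski lemma for finite-order elements of $\GL_d(\ZZ)$. With $g = \dim A$, the automorphism $u$ acts faithfully on the integral lattice $\Lambda := H_1(A, \ZZ) \cong \ZZ^{2g}$ (or, if one prefers a purely algebraic framework, on an $\ell$-adic Tate module $T_\ell(A)$), producing a finite-order matrix $U \in \GL_{2g}(\ZZ)$. Since $A[n] \cong \Lambda/n\Lambda$ with the $u$-action given by reduction of $U$, the hypothesis $u \equiv 1 \pmod n$ translates precisely to $U \equiv I \pmod n$, and the proposition becomes the matrix claim: any finite-order $U \in \GL_{2g}(\ZZ)$ with $U \equiv I \pmod n$ equals $I$ if $n \geq 3$, and satisfies $U^2 = I$ if $n = 2$.

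To prove this, assume $U \neq I$ and let $N$ denote its order. Passing from $U$ to $U^{N/m}$ for $m$ a prime divisor of $N$ (still congruent to $I$ modulo $n$), one reduces to the case where $U$ itself has prime order $m$. Pick any prime $p \mid n$ and write $U = I + p^s D$ with $D \in M_{2g}(\ZZ)$ not divisible by $p$ and $s \geq v_p(n) \geq 1$ chosen maximally. The binomial expansion gives
\[ U^m \;=\; I \;+\; m p^s D \;+\; \sum_{k=2}^{m} \binom{m}{k} p^{ks} D^k. \]
If $m \neq p$, the unit $m \bmod p$ makes the linear term $m p^s D$ have entrywise $p$-adic valuation exactly $s$, while every later term has valuation $\geq 2s$; setting $U^m = I$ then forces $D \equiv 0 \pmod p$, a contradiction. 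If $m = p$, the extra factor of $p$ hidden in $\binom{p}{k}$ for $1 \leq k \leq p - 1$ raises the $k = 1$ term to valuation $s + 1$, while the remaining terms have valuation at least $\min(1 + 2s,\, ps)$; as soon as $s(p - 1) \geq 2$, this minimum is $\geq s + 2$, and again $D \equiv 0 \pmod p$, a contradiction.

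The inequality $s(p - 1) \geq 2$ fails only for $p = 2$ and $s = 1$, where $U = I + 2D$ and $U^2 = I + 4 D(I + D)$ can genuinely vanish with $D \neq 0$. When $n \geq 3$ this bad case can always be bypassed: either $n$ has an odd prime factor (use that one), or $n$ is a power of $2$ with $v_2(n) \geq 2$, forcing $s \geq 2$. Hence $U = I$ whenever $n \geq 3$. When $n = 2$, the prime-order analysis above shows that every prime divisor of $N$ must be $2$, so $N$ is a $2$-power; if $N \geq 4$ then $U^2 \equiv I \pmod 4$ and the already-established case $n = 4$ yields $U^2 = I$, so $N = 2$ and we are done. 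The main obstacle I anticipate is exactly the careful bookkeeping of the exceptional $p = 2$, $s = 1$ case and the bootstrap from $n = 2$ up to $n = 4$, which is what ultimately separates the involution conclusion from the identity conclusion.
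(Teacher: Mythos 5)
The paper does not actually prove this proposition --- it is quoted from Serre's appendix to Grothendieck's Teichm\"uller expos\'e, and the only thing used downstream is its matrix consequence, Corollary~\ref{cor:serre}, about finite subgroups of $\GL_{\rho}(\ZZ)$. What you have written is a complete and correct proof of exactly that lattice statement (the classical Minkowski--Serre congruence lemma): the reduction to an element of prime order $m$, the valuation bookkeeping separating $m\neq p$ from $m=p$, the identification of $p=2$, $s=1$ as the unique exceptional configuration, and the bootstrap $n=2\Rightarrow U^{2}\equiv I\ (\mathrm{mod}\ 4)\Rightarrow U^{2}=I$ are all sound. Your route is genuinely different from the argument usually attributed to Serre, which works with eigenvalues: each eigenvalue of $u$ is a root of unity $\zeta$ with $\zeta-1\in n\ZZ[\zeta]$, so $n^{[\QQ(\zeta):\QQ]}$ divides the norm of $\zeta-1$, which is at most $2^{[\QQ(\zeta):\QQ]}$ in absolute value; for $n\geq 3$ this forces $\zeta=1$ (and $\zeta=\pm1$ for $n=2$), after which unipotence plus finite order gives $u=1$. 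Your congruence argument is more elementary and self-contained, at the cost of being slightly longer. The one loose end is the opening reduction: faithfulness of the action on $H_{1}(A,\ZZ)$ is a transcendental statement about complex abelian varieties, and over a general base one must pass to Tate modules $T_{\ell}(A)$ for primes $\ell\mid n$ prime to the characteristic (your valuation argument then runs verbatim over $\ZZ_{\ell}$, but you should say which $\ell$ you use and note the genuine subtlety when $n$ is a power of the characteristic). For this paper the caveat is immaterial, since only the $\GL_{\rho}(\ZZ)$ version is ever invoked, and that you have proved in full.
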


The proof of Lemma \ref{lemma:2.1} will use the following well-known corollary to Proposition \ref{thm:serre} (see also \cite[Remark 2.3]{AmMa}).
\begin{cor}\label{cor:serre}
Let H be a finite subgroup of $\mathrm{GL}_{\rho}(\ZZ)$. The reduction modulo 3 homomorphism $\phi_{3}: H\rightarrow\mathrm{GL}_{\rho}\left(\ZZ/3\ZZ\right)$ is injective.  As a result, the order of a finite subgroup of $\mathrm{GL}_{\rho}(\ZZ)$ is less than $3^{\rho^2}$.
\end{cor}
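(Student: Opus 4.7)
The plan is to reduce the injectivity of $\phi_3$ to Proposition \ref{thm:serre} by realizing a matrix in the kernel as an automorphism of an auxiliary abelian variety acting trivially on $3$-torsion. First, I would take any element $u$ in the kernel of $\phi_3$, so $u \in \mathrm{GL}_\rho(\ZZ)$ has finite order (because $H$ is finite) and satisfies $u \equiv I \pmod 3$.

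Next, I would fix an arbitrary elliptic curve $E$ over $\CC$ and set $A = E^\rho$. The ring $M_\rho(\ZZ)$ embeds into $\mathrm{End}(A)$ via coordinate-wise linear combinations, and since $\det u = \pm 1$, the matrix $u$ in particular defines an automorphism of $A$ of the same (finite) order as $u$ itself. Under the canonical identification $A[3] \cong E[3]^\rho$ as a $\ZZ/3\ZZ$-module of rank $2\rho$, the action of $u$ is simply the action of the matrix $u$ on the factor $(\ZZ/3\ZZ)^\rho$ tensored with the identity on $E[3]$, and so the hypothesis $u \equiv I \pmod 3$ translates exactly into the statement that $u$ acts as the identity on $A[3]$. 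Applying Proposition \ref{thm:serre} with $n=3$ then forces $u = 1$, which gives injectivity of $\phi_3$.

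For the cardinality bound, once injectivity is established, $|H|$ divides $|\mathrm{GL}_\rho(\ZZ/3\ZZ)| = \prod_{i=0}^{\rho-1}(3^\rho - 3^i)$, which is strictly less than $(3^\rho)^\rho = 3^{\rho^2}$, yielding the desired inequality.

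The main conceptual step (and really the only nontrivial one) is the second: packaging the integer matrix $u$ as an honest automorphism of an abelian variety in a way that makes the mod-$3$ reduction on matrices match the action on $A[3]$. Once this dictionary is set up, Serre's result does all the real work and the remaining counting is routine.
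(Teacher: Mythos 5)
Your proof is correct and follows essentially the same route as the paper: reduce to Proposition~\ref{thm:serre} by observing that any $u$ in $\ker\phi_3$ has finite order and satisfies $u\equiv I_\rho \bmod 3$, then bound $|H|$ by $|\mathrm{GL}_\rho(\ZZ/3\ZZ)|<3^{\rho^2}$. In fact you supply a detail the paper leaves implicit --- realizing $u$ as a finite-order automorphism of the abelian variety $E^\rho$ so that Serre's statement literally applies --- which is a welcome elaboration rather than a divergence.
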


\begin{proof}
Let $u$ be an element in $\ker(\phi_{3})\subset H$. Then $u$ has finite order and $u\equiv I_{\rho}  \mod 3$, where $I_{\rho}$ is the $\rho\times \rho$ identity matrix. By Proposition \ref{thm:serre}, we have $u=I_{\rho}$. This establishes that $\phi_3$ is injective.  We conclude that the order of $H$ is at most $|\GL_{\rho}(\ZZ/3\ZZ)|$, which is less than $3^{\rho^2}$. 
\end{proof} 

\begin{remark}
In an unpublished paper from 1995, Feit \cite{Fei} shows that the maximal order of a finite subgroup of $\mathrm{GL}_{\rho}(\QQ)$ is $2^{\rho} \rho!$, except when $\rho=2,4,6,7,8,9,10$.  He further shows that for these exceptional cases, the maximal order is 
\[ 12, 1152, 103680, 2903040, 696729600, 1393459200, 8360755200,\] respectively.  Therefore, the maximal order of a finite subgroup is at most $\tfrac{135}2 2^\rho \rho!$ for all $\rho$.  See \cite{MR2082091} for more information about these subgroups. Additionally, in 1997, Friedland showed in \cite{MR1443385} that the orthogonal groups are the maximal subgroups for $\rho$ large enough.
\end{remark}

\
\subsection{Height of algebraic numbers}

We will use the following auxiliary height bounds in our proofs of Theorem~\ref{Thm1} and Theorem~\ref{Thm2}.

The first  is Corollary 1.6 of \cite{MR2914852}.
\begin{prop}[Amoroso-Viada]\label{AmVi1.6}
Let $\alpha_1,\dots,\alpha_n$ be multiplicatively independent algebraic numbers in a number field $\AAA$ of degree $D=[\AAA:\QQ]$. Then
\[ h(\alpha_1)\dots h(\alpha_n)\geq D^{-1}(1050n^5 \log 3D)^{-n^2 (n+1)^2}.\]
\end{prop}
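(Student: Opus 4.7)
The plan is to prove the inequality by a transcendence-style auxiliary function argument in the vein of Dobrowolski, Laurent, and Amoroso--David, proceeding by induction on $n$. The base case $n=1$ is essentially Dobrowolski's theorem, which gives $h(\alpha_1)\gg D^{-1}(\log 3D)^{-3}$, stronger than what is required. For the inductive step, I would argue by contradiction: assume the product $h(\alpha_1)\cdots h(\alpha_n)$ is strictly smaller than the claimed lower bound, and work toward producing a non-trivial multiplicative relation among the $\alpha_i$, contradicting the hypothesis of multiplicative independence.

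The first technical step is to apply an arithmetic Siegel lemma (for instance Bombieri--Vaaler) to produce a non-zero polynomial $P\in\ZZ[X_1,\dots,X_n]$ with partial degrees $L_i$ taken roughly proportional to $1/h(\alpha_i)$ and of small logarithmic height, vanishing to some prescribed order $T$ along the full Galois orbit of the point $(\alpha_1,\dots,\alpha_n)$. Comparing the dimension of the ambient space of such polynomials to the number of linear conditions imposed by the vanishing forces $\prod L_i$ to be large, and it is precisely here that the assumed smallness of $\prod h(\alpha_i)$ is absorbed into the construction.

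The heart of the argument is a Frobenius-type extrapolation. For each rational prime $p$ below some parameter $P^{*}$ and unramified in the splitting field of the $\alpha_i$, reduction modulo a prime above $p$ sends $\alpha_i$ to $\alpha_i^p$, so from the vanishing at the original orbit and an archimedean Liouville-type size estimate for $P$ one obtains honest algebraic vanishing of $P$ at all points of the form $(\alpha_1^{k_1},\dots,\alpha_n^{k_n})$ with each $k_i$ a product of primes below $P^{*}$. One then feeds this into an effective zero estimate on $\GG_m^n$ of Philippon or Amoroso--David type: multiplicative independence of $\alpha_1,\dots,\alpha_n$ ensures that this vanishing set is not contained in any proper algebraic subgroup of $\GG_m^n$, so once it is large enough the zero estimate forces $P\equiv 0$, contradicting the Siegel construction.

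The main obstacle is the simultaneous balancing of parameters. One must select the multidegrees $L_i$, the multiplicity $T$, and the prime bound $P^{*}$ so that the Siegel step succeeds, the Frobenius extrapolation boosts the vanishing enough to trigger the zero estimate, and yet the zero estimate really does force $P=0$ rather than only a lower-dimensional relation; in the latter case one descends to a smaller $n$ and invokes the inductive hypothesis. Tracking all explicit constants through this scheme --- in particular the combinatorial factor $n^2(n+1)^2$ produced by iterated applications of the zero estimate and the descent, and the $\log 3D$ factor coming from the Chebotarev/prime-counting bounds needed to guarantee enough unramified small primes --- is what pins down the final shape $D^{-1}(1050\,n^5\log 3D)^{-n^2(n+1)^2}$, and this bookkeeping, rather than any single conceptual step, is by far the most delicate part of the argument.
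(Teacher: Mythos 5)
This proposition is not proved in the paper at all: it is quoted verbatim as Corollary 1.6 of Amoroso--Viada, \emph{Small points on rational subvarieties of tori} (Comment.\ Math.\ Helv.\ 87 (2012)), and is used as a black box in the proofs of both main theorems. So there is no internal argument to compare yours against; the relevant comparison is with the Amoroso--Viada paper itself.

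Your sketch correctly identifies the ancestry of the result (Dobrowolski's Frobenius extrapolation, a Siegel-lemma construction, zero estimates on $\GG_m^n$, induction on $n$), but as a proof it has a genuine gap: every quantitative step is deferred. The entire content of the proposition is the explicit constant $D^{-1}(1050n^5\log 3D)^{-n^2(n+1)^2}$, and your text asserts rather than performs the parameter balancing that produces it; "tracking all explicit constants \dots is by far the most delicate part" is a description of the missing proof, not a proof. Two specific points where the sketch would need real work: (i) the extrapolation from vanishing at the Galois orbit to vanishing at $(\alpha_1^{k_1},\dots,\alpha_n^{k_n})$ for \emph{all} $k_i$ composed of small primes does not follow in one step --- each Frobenius step consumes an order of vanishing, and controlling this loss across iterations is where the exponent $n^2(n+1)^2$ actually comes from; and (ii) the congruence $P(\alpha^p)\equiv P(\alpha)^p \pmod{p}$ needs denominators of the $\alpha_i$ handled, since the $\alpha_i$ are not assumed to be algebraic integers. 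You should also note that the published derivation does not proceed point-by-point as you describe: Amoroso--Viada prove a theorem on the essential minimum and successive minima of a subvariety $V\subset\GG_m^n$ in terms of its obstruction index, and Corollary 1.6 is obtained by applying that theorem to the zero-dimensional variety $\{(\alpha_1,\dots,\alpha_n)\}$, with multiplicative independence guaranteeing that the point lies in no torsion subvariety. Your route is closer to the older Amoroso--David treatment of the higher-dimensional Lehmer problem; it could in principle be pushed through, but it is not the argument behind the constant as stated, and in its present form it is an outline rather than a proof.
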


 The following result is Th\'eor\`eme 1.3 from \cite{MR2349643}.
\begin{prop}[Amoroso-Delsinne]\label{AmDel1.3}
Let $\alpha$ be a non-zero algebraic number which is not a root of unity. For every abelian extension $\AAA$ of $\BB$, we have
\[
h(\alpha) \geq \frac{(g(\tau)\Delta)^{-c}}{D} \frac{\left(\log \log 5D\right)^3}{\left(\log 2D\right)^4},
\]
where $c$ is an absolute, strictly positive constant, $\Delta$ is the absolute value of the discriminant of $\BB$ over $\QQ$, $\tau=[\BB:\QQ]$, $D=[\AAA(\alpha):\AAA]$, and $g(\tau)=1$ if there exists a tower of extensions 
\[ \QQ=\BB_0\subset \BB_1\subset \dots \subset \BB_m=\BB\]
with $\BB_i/\BB_{i-1}$ Galois for $i=1,\dots m$, and $g(\tau)=\tau!$ otherwise.
\end{prop}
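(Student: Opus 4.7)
The approach is to follow the Dobrowolski framework as refined by Amoroso and Dvornicich, adapted to the relative abelian situation over a base field $\BB$. The core idea is that in an abelian extension, Frobenius elements commute with all Galois conjugations, so an auxiliary polynomial vanishing at $\alpha$ can be extrapolated to vanish at many powers $\alpha^p$, and this overabundance of zeros forces $h(\alpha)$ to be large.

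First I would build an auxiliary polynomial $P(x) \in \calO_\BB[x]$, of degree at most some parameter $L$ and controlled logarithmic height, vanishing to high prescribed order $T$ at $\alpha$. This is a Siegel's-lemma construction over $\BB$ rather than $\ZZ$; the need to fix an integral basis of $\calO_\BB$ and bound its size in terms of $\Delta$ is where the discriminant first enters, producing the factor $\Delta^{-c}$ in the final bound. The degree $D = [\AAA(\alpha):\AAA]$ and the vanishing order $T$ must be balanced against $L$ so that a nontrivial $P$ exists.

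Next I would carry out the extrapolation step. For a rational prime $p$ unramified in $\AAA(\alpha)/\QQ$, the Frobenius $\sigma_\mathfrak{P}$ at a prime $\mathfrak{P}$ above $p$ in $\AAA(\alpha)$ reduces to $x \mapsto x^{p^f}$ on the residue field. Because $\AAA/\BB$ is abelian, $\sigma_\mathfrak{P}$ commutes with every $\BB$-conjugation, so one can transfer the vanishing of $P$ at $\alpha$ to an approximate vanishing of $P$ at $\alpha^{p^f}$ modulo $\mathfrak{P}$. Combining this over a Chebotarev-density family of primes $p$ in a sharp prime-number-theorem range $p \leq N$, one obtains vanishing of $P$ at many multiplicatively distinct points $\alpha^{p^f}$; a zero estimate, together with the product formula applied to $P(\alpha^{p^f})$, yields a lower bound for $h(\alpha)$. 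Optimizing $L$, $T$, and $N$ produces the shape $(\log\log 5D)^3 / (\log 2D)^4 \cdot D^{-1}$.

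The main obstacles are twofold. The first is controlling ramification: primes of $\BB$ that ramify in $\AAA(\alpha)$, or primes dividing $\Delta$, must be excluded from the Chebotarev range, and bounding their contribution cleanly is what forces the precise exponent $c$ of $\Delta$. The second, and more delicate, is the hypothesis on the tower $\QQ = \BB_0 \subset \cdots \subset \BB_m = \BB$. When such a Galois tower exists, one can apply Kummer-descent inductively, layer by layer, keeping the auxiliary polynomial defined over each intermediate field; otherwise one must pass to the Galois closure of $\BB/\QQ$ and symmetrize $P$ over the full group, which forces a bound degrading by a factor of $\tau!$ and accounts for the dichotomy in the definition of $g(\tau)$.
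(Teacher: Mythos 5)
This proposition is not proved in the paper at all: it is quoted verbatim as Th\'eor\`eme 1.3 of Amoroso--Delsinne \cite{MR2349643} and used as a black box, so there is no internal proof to compare yours against. Your outline does capture the broad strategy of that reference --- a Dobrowolski-style auxiliary polynomial over $\calO_{\BB}$, extrapolation via Frobenius congruences in the spirit of Amoroso--Dvornicich, and an effective count of usable primes --- but it is a high-level gesture at what is in reality a long and delicate argument, and as written it could not replace the citation.

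Two substantive corrections to the sketch itself. First, the place where the abelian hypothesis actually bites is more specific than ``Frobenius commutes with all conjugations'': for a prime $\mathfrak{p}$ of $\BB$ above an unramified rational prime $p$, the abelian structure of $\AAA/\BB$ makes the Frobenius a single well-defined element $\sigma$ of $\mathrm{Gal}(\AAA/\BB)$, which upgrades the residue-field identity $x\mapsto x^{N\mathfrak{p}}$ to a congruence $\sigma(\alpha)\equiv\alpha^{N\mathfrak{p}}$ modulo the full ideal $\mathfrak{p}\,\calO_{\AAA(\alpha)}$ rather than modulo a single prime of $\AAA(\alpha)$ above it. A congruence modulo one prime above $p$, which is what your phrasing describes, holds with no abelian hypothesis and only reproduces the classical Dobrowolski bound with unwanted dependence on $[\AAA:\QQ]$. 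Second, your explanation of the $g(\tau)$ dichotomy is misattributed: it has nothing to do with Kummer descent versus symmetrizing the auxiliary polynomial over the Galois closure of $\BB/\QQ$. It comes from the effective prime ideal theorem needed to produce sufficiently many degree-one primes of $\BB$ of small norm; the inputs are Stark's effective Brauer--Siegel estimates \cite{MR0342472} and Friedlander's prime ideal bounds \cite{MR566874} (see the Remark immediately following the proposition in the paper), and Stark's constants are dramatically better precisely when $\BB$ lies in a tower of successive normal extensions of $\QQ$, which is the source of the $1$ versus $\tau!$ split.
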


\begin{remark}
The constant $c$ in Proposition \ref{AmDel1.3} depends on a number of constants defined in \cite{MR2349643}, as well as on constants from papers of Friedlander \cite{MR566874} and Stark \cite{MR0342472}. 

\end{remark}

Finally, we will use Th\'eor\`eme 1.6 of \cite{MR2567747}, in which $\QQ^{\rm{ab}}$ denotes the maximal abelian extension of $\QQ$, and $\GG_m(\overline{\QQ})$ denotes the multiplicative group of  $\overline{\QQ}$.
\begin{prop}[Delsinne]\label{Del1.6}
For any positive integer $n$, there exists an effectively computable constant $c(n)>0$ depending only on $n$ for which the following property holds. Let $\bm{\alpha}=(\alpha_1,\dots,\alpha_n)\in\GG_m^n(\overline{\QQ})$. If
\[\prod_{i=1}^n h(\alpha_i)\leq \big(c(n)[\QQ^{\rm{ab}}(\bm{\alpha}):\QQ^{\rm{ab}}](\log(3[\QQ^{\rm{ab}}(\bm{\alpha}):\QQ^{\rm{ab}}]))^{\kappa(n)}\big)^{-1},\]
where $\kappa(n)=3n(2(n+1)^2(n+1)!)^n$, then $\bm{\alpha}$ is contained in a torsion subvariety $B$ for which
\[(\deg B)^{1/\rm{codim}(B)}\leq c(n)[\QQ^{\rm{ab}}(\bm{\alpha}):\QQ^{\rm{ab}}]^{\eta(n)}(\log(3[\QQ^{\rm{ab}}(\bm{\alpha}):\QQ^{\rm{ab}}]))^{\mu(n)},\]
where \[\eta(n)=(n-1)!\bigg(\sum_{i=0}^{n-3}\frac{1}{i!}+1\bigg)+n-1\] and $\mu(n)=8m!(2(n+1)^2(n+1)!)^n$.

In fact, we may take $c(n)=(2n^{2})^{n}\exp\left(64n^{2}n!\left(2(n+1)^{2}(n+1)!\right)^{2n}\right)$.
\end{prop}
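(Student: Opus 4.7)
The plan is to follow the quantitative strategy for Bogomolov-type statements over abelian fields, in the spirit of Amoroso--David, Amoroso--Zannier and Amoroso--Viada. The goal is to show that a small product of heights forces $\bm{\alpha}$ to lie on a torsion subvariety of controlled degree. I would argue by contrapositive: assume that $\bm{\alpha}$ is \emph{not} contained in any torsion subvariety $B \subset \GG_m^n$ of degree bounded by the quantity on the right, and deduce a lower bound on $\prod h(\alpha_i)$ that contradicts the hypothesis.

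First I would set up the auxiliary construction. Let $D = [\QQ^{\mathrm{ab}}(\bm{\alpha}):\QQ^{\mathrm{ab}}]$. Using a Siegel-type lemma on the homogeneous coordinate ring of $\GG_m^n$, together with the arithmetic B\'ezout theorem, I would construct a nonzero polynomial $P \in \ZZ[x_1^{\pm 1},\dots,x_n^{\pm 1}]$ of multidegree $(L_1,\dots,L_n)$ and logarithmic height bounded in terms of $L_i$, $D$ and the $h(\alpha_i)$, so that $P$ vanishes with prescribed multiplicity $T$ at $\bm{\alpha}$ and along the orbit of $\bm{\alpha}$ under a well-chosen set of Frobenius-type substitutions $\sigma_p$ with $p$ ranging over primes in an interval $[P_0, 2P_0]$. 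The point of working with $\QQ^{\mathrm{ab}}$ is that Chebotarev supplies many such primes for which the action of $\sigma_p$ on $\bm{\alpha}$ is close to raising to the $p$-th power, which is the crucial ingredient that replaces the classical Dobrowolski trick in this abelian setting.

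Next I would choose parameters $L_i, T, P_0$ to balance three constraints: (i) the Siegel lemma is solvable, (ii) $P$ vanishes to total order exceeding what a zero estimate allows unless $\bm{\alpha}$ lies on a proper subvariety, and (iii) the arithmetic size of $P$ evaluated at $\bm{\alpha}$ violates a Liouville lower bound. Applying Philippon's zero estimate (in the sharp form for tori) produces a proper algebraic subgroup-translate $H \cdot \zeta$ containing $\bm{\alpha}$, with $\deg(H)$ controlled by $L_1 \cdots L_n / T^{\mathrm{codim}\, H}$. A descent on the codimension then lets me replace $H\cdot \zeta$ by a torsion subvariety $B$ of the desired degree bound, provided $\bm{\alpha}$ is not contained in any smaller torsion subvariety; otherwise one iterates on the quotient torus $\GG_m^n / H$.

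The main obstacle is the explicit tracking of constants through all three steps so that one recovers the precise exponents $\kappa(n), \eta(n), \mu(n)$ and the tower constant $c(n)$. In particular, the Chebotarev input has to be made effective (using Lagarias--Odlyzko or a GRH-free variant) uniformly in $\bm{\alpha}$, the Siegel lemma must be applied with sharp bounds on the height of the coefficient field, and Philippon's zero estimate has to be combined with the arithmetic B\'ezout inequality in a way that survives the inductive descent. Carrying the constants through the descent is where the factorials and iterated exponentials in $\kappa(n)$ and $\mu(n)$ appear, and this bookkeeping is the technical heart of the argument.
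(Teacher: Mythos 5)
This proposition is not proved in the paper at all: it is Th\'eor\`eme 1.6 of Delsinne \cite{MR2567747}, quoted verbatim --- explicit constants included --- as an external input, so there is no internal proof to compare your argument against. That said, your sketch does correctly describe the architecture of Delsinne's actual argument: an auxiliary polynomial via Siegel's lemma, extrapolation using congruence properties of Frobenius elements in abelian extensions (the relative replacement for Dobrowolski's trick, going back to Amoroso--Zannier), Philippon-type zero estimates, and an inductive descent on codimension that produces the torsion subvariety $B$.

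As a proof of the stated proposition, however, the proposal has a decisive gap: this proposition \emph{is} its constants. The assertion that one may take $\kappa(n)=3n(2(n+1)^2(n+1)!)^n$ and $c(n)=(2n^{2})^{n}\exp(64n^{2}n!(2(n+1)^{2}(n+1)!)^{2n})$ is precisely what the present paper uses (via Corollary~\ref{cor:Del1.6} in Case 2 of the proof of Theorem~\ref{Thm2}), and your outline explicitly defers the derivation of these exponents to ``bookkeeping.'' Without carrying out the parameter choices $L_i$, $T$, $P_0$, the induction on $n$, and the propagation of constants through the zero estimate and the descent, none of the quantitative statement is established; a qualitative ``small height forces a torsion subvariety'' conclusion would not suffice for the application here. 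A smaller inaccuracy: over the base $\QQ^{\rm ab}$ the arithmetic input is not an effective Chebotarev theorem \`a la Lagarias--Odlyzko but the congruence $\sigma_p(\gamma)\equiv\gamma^p$ modulo primes above $p$ for the canonical Frobenius in abelian extensions, together with control of ramification; effective prime-ideal estimates of the Friedlander--Stark type enter only in the more general relative statement over a number field base (Proposition~\ref{AmDel1.3} in this paper), not in this one.
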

Notice that if $\alpha_{1},\dots,\alpha_{n}$ are multiplicatively independent, then $\bm{\alpha}=(\alpha_1,\dots,\alpha_n)$ cannot be contained in a torsion subvariety. This simple observation yields the following corollary to  Proposition~\ref{Del1.6}.
\begin{cor}\label{cor:Del1.6}
Let $n$ be a positive integer, and let $\alpha_{1},\dots,\alpha_{n}$ be multiplicatively independent algebraic numbers. Then there exists an effectively computable constant $c(n)>0$ depending only on $n$ for which \[\prod_{i=1}^n h(\alpha_i) > \big(c(n)[\QQ^{\rm{ab}}(\bm{\alpha}):\QQ^{\rm{ab}}](\log(3[\QQ^{\rm{ab}}(\bm{\alpha}):\QQ^{\rm{ab}}]))^{\kappa(n)}\big)^{-1},\]
where $\kappa(n)=3n(2(n+1)^2(n+1)!)^n$.
\end{cor}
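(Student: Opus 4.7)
The plan is to derive Corollary~\ref{cor:Del1.6} as a direct contrapositive of Proposition~\ref{Del1.6}. The content of the corollary is exactly what Proposition~\ref{Del1.6} says once one observes that the alternative conclusion — containment of $\bm{\alpha}$ in a proper torsion subvariety of $\GG_m^n(\overline{\QQ})$ — is ruled out by multiplicative independence. So the entire proof reduces to making that incompatibility precise.

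First I would argue by contradiction: suppose
\[\prod_{i=1}^n h(\alpha_i)\leq \bigl(c(n)[\QQ^{\rm{ab}}(\bm{\alpha}):\QQ^{\rm{ab}}](\log(3[\QQ^{\rm{ab}}(\bm{\alpha}):\QQ^{\rm{ab}}]))^{\kappa(n)}\bigr)^{-1},\]
where $c(n)$ is the constant from Proposition~\ref{Del1.6}. Applying that proposition gives a torsion subvariety $B\subset\GG_m^n$ containing $\bm{\alpha}=(\alpha_1,\dots,\alpha_n)$ whose degree satisfies the stated bound; since the exponent $1/\mathrm{codim}(B)$ in the conclusion of Proposition~\ref{Del1.6} is meaningful only for $\mathrm{codim}(B)\geq 1$, we may take $B$ to be proper.

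Next I would invoke the standard description of torsion subvarieties: each irreducible proper torsion subvariety of $\GG_m^n$ is a translate, by a torsion point, of a proper algebraic subgroup, and is therefore cut out by equations of the form $x_1^{a_1}\cdots x_n^{a_n}=\zeta$ with $(a_1,\dots,a_n)\in\ZZ^n\setminus\{0\}$ and $\zeta$ a root of unity. Specialising to $\bm{\alpha}\in B$ yields a relation $\alpha_1^{a_1}\cdots\alpha_n^{a_n}=\zeta$; letting $m$ be the order of $\zeta$ and raising both sides to the $m$-th power gives the non-trivial multiplicative relation $\alpha_1^{ma_1}\cdots\alpha_n^{ma_n}=1$, contradicting the assumed multiplicative independence of $\alpha_1,\dots,\alpha_n$.

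The conclusion then follows: the assumed inequality must fail, i.e., the reverse strict inequality holds, which is precisely the statement of the corollary. The only step that is not completely mechanical is the translation between the geometric conclusion of Proposition~\ref{Del1.6} (containment in a proper torsion subvariety) and the algebraic hypothesis of multiplicative independence, but this follows directly from the structure theorem for algebraic subgroups of $\GG_m^n$ and is the only real content of the argument.
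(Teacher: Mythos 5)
Your proposal is correct and takes essentially the same route as the paper, which simply records the observation that multiplicative independence of $\alpha_1,\dots,\alpha_n$ prevents $\bm{\alpha}$ from lying in a (proper) torsion subvariety and deduces the corollary as the contrapositive of Proposition~\ref{Del1.6}. You have merely filled in the standard structure-theory details (torsion cosets are cut out by monomial equations equal to roots of unity) that the paper leaves implicit.
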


\subsection{Estimates for $\phi(n)/n$}

We will make use of the following lower bound for Euler's totient function, which is a slightly weaker version of \cite[Theorem 15]{MR0137689}. 
\begin{prop}\label{prop:totientlb} For all natural numbers $n\geq3$, we have
\[
\frac{\phi(n)}{n} > \frac{1}{\exp\left(\gamma\right) \log \log n + \frac{3}{\log \log n} },
\]
where $\gamma$ is Euler's constant.
\end{prop}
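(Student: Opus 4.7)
The plan is to deduce the stated inequality directly from the sharper bound of Rosser and Schoenfeld \cite{MR0137689}, which is already the reference cited in the statement. Their Theorem 15 establishes that for all integers $n \geq 3$,
\[ \frac{n}{\phi(n)} < e^{\gamma}\log\log n + \frac{2.50637}{\log\log n}, \]
outside of a small explicit set of exceptional values. Since $2.50637 < 3$ and $\log\log n > 0$ whenever $n \geq 3$, the right-hand side is strictly less than $e^{\gamma}\log\log n + \tfrac{3}{\log\log n}$, so taking reciprocals of these strictly positive quantities yields the claimed lower bound on $\phi(n)/n$.

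The one subtlety is to handle the finitely many exceptional $n$ where Rosser-Schoenfeld's sharper inequality is known to fail, all of which are small primorial-type integers where $n/\phi(n)$ comes close to its extremal value. For these, I would verify the stated weaker inequality by direct computation; the extra slack $(3 - 2.50637)/\log\log n$ in the denominator is more than enough to absorb these exceptions, since they are small enough that both sides can be evaluated numerically with ease.

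The main obstacle, were one to avoid citing Rosser-Schoenfeld entirely, would be producing an \emph{explicit} secondary constant in Mertens' product formula $\prod_{p \leq x}(1 - 1/p) \sim e^{-\gamma}/\log x$, rather than merely its asymptotic form. That route would require effective estimates for Chebyshev's function $\theta(x) = \sum_{p \leq x} \log p$, combined with the observation that $\phi(n)/n$ attains its minimum on $[1,N]$ at primorials, so that controlling $\prod_{p \leq x}(1 - 1/p)$ with a fully explicit error term controls the extremal ratio. Since we only need a weakening of Rosser-Schoenfeld's theorem, however, the proof reduces essentially to a constant comparison and a check of the exceptional list.
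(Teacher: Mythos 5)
Your proposal is correct and takes essentially the same approach as the paper: the paper offers no proof of this proposition, presenting it simply as a weakening of Rosser--Schoenfeld's Theorem 15, which is exactly your deduction (replace $2.50637$ by $3$ and note that the lone exceptional modulus $n = 223092870$, where their constant must be raised to $2.50638$, is still comfortably absorbed by the extra slack). Nothing further is needed.
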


The following lower bound for $\phi(n)^{1+\epsilon}/n$ will be useful in making the lower bound constants explicit in the proofs of both of our main theorems.

\begin{lemma}\label{lem:phi}
For any $\epsilon>0$, there is an effective constant $C(\epsilon)$  such that  \[\frac{\phi(n)^{1+\epsilon}}{n}\geq C(\epsilon)\]
for all $n\geq 3.$ Specifically, one can take 
\[ C(\epsilon) = \left( \frac{(\log \log 3){\left(\exp(2)\frac{\epsilon}{2+2\epsilon}\right)^{\sqrt{\frac{\epsilon}{2+2\epsilon}}+1}}}{{\exp\left(\gamma\right)}+3^{\frac{1}{1+\epsilon}} {\left(\exp(2)\frac{\epsilon}{2+2\epsilon}\right)^{\sqrt{\frac{\epsilon}{2+2\epsilon}}+1}} }\right)^{1+\epsilon}.\]
\end{lemma}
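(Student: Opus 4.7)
My strategy is to apply Proposition~\ref{prop:totientlb} to reduce the claim to a one-variable inequality, then split that inequality into two bounds handled by elementary calculus.

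From Proposition~\ref{prop:totientlb}, for $n \geq 3$,
\[
\frac{\phi(n)^{1+\epsilon}}{n} \;>\; \frac{n^{\epsilon}}{\bigl(\exp(\gamma) \log\log n + 3/\log\log n\bigr)^{1+\epsilon}},
\]
so it suffices to bound the right-hand side below by $C(\epsilon)$. Set $\beta := \epsilon/(1+\epsilon)$, $a := e^2\beta/2$, and $b := 1+\sqrt{\beta/2}$. Taking the $(1+\epsilon)$-th root and substituting $u = \log\log n$ (so $u \geq u_0 := \log\log 3$), the claim becomes
\[
\frac{u\, e^{\beta e^u}}{\exp(\gamma) u^2 + 3} \;\geq\; \frac{u_0\, a^b}{\exp(\gamma) + 3^{1-\beta}\, a^b}\qquad\text{for all } u \geq u_0.
\]

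Cross-multiplying and comparing term by term, this inequality is implied by the pair \emph{(a)} $e^{\beta e^u}/u \geq u_0\, a^b$ and \emph{(b)} $u\, e^{\beta e^u} \geq 3^{\beta} u_0$, each for $u \geq u_0$: multiplying (a) by $\exp(\gamma) u$ accounts for the ``$\exp(\gamma)$'' terms, and multiplying (b) by $3^{1-\beta} a^b$ accounts for the ``$3$'' terms. Bound (b) is immediate: at $u = u_0$ the left side equals $u_0 \cdot 3^{\beta}$, matching the right side, and both factors $u$ and $e^{\beta e^u}$ are monotonically increasing.

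For (a), I apply the elementary inequality $e^x \geq (ex/b)^b$ (valid for all $x, b > 0$, being a rewrite of $\log y \leq y - 1$) with $x = \beta e^u$ to obtain $e^{\beta e^u}/u \geq (e\beta/b)^b \cdot e^{bu}/u$. The function $e^{bu}/u$ attains its minimum on $[u_0, \infty)$ at $u = 1/b$---which lies in the interval since $b \leq 1 + 1/\sqrt 2 \ll 1/u_0 \approx 10.6$---with value $be$. Hence $e^{\beta e^u}/u \geq (e\beta/b)^b \cdot be$, and comparing with $u_0 (e^2\beta/2)^b$, bound (a) reduces to the purely numerical inequality $2\,(2/(be))^{b-1} \geq u_0$, which holds easily on $b \in [1, 1 + 1/\sqrt 2]$ (the left-hand side exceeds $1$ throughout). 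The main obstacle is the bookkeeping needed to pass through the explicit constants; the specific exponent $b = 1 + \sqrt{\beta/2}$ is chosen precisely so that the resulting constant matches the stated $C(\epsilon)$.
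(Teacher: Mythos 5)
Your proof is correct and arrives at exactly the stated constant, but it organizes the estimate quite differently from the paper. The paper's argument stays with the variable $n$: it applies the bound $\log x\leq x^{\theta}/(\exp(1)\theta)$ twice in succession to get $(\log\log n)^{2}\leq n^{2\theta^{2}}/(\exp(1)\theta)^{2\theta+2}$, replaces the numerator $\log\log n$ by $\log\log 3$ and the leftover $3/n^{2\theta^{2}}$ by $3^{1-2\theta^{2}}$ using $n\geq 3$, divides through by $n^{2\theta^{2}}$, and raises to the power $1/(1-2\theta^{2})=1+\epsilon$ with $2\theta^{2}=\epsilon/(1+\epsilon)$; every step is a direct substitution into a single fraction. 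You instead pass to $u=\log\log n$, cross-multiply, and split the comparison into two term-by-term inequalities: your bound (b) is sharp at $u=u_{0}$ and follows from monotonicity (this is where the paper's use of $n\geq 3$ reappears), while (a) is handled by $e^{x}\geq(ex/b)^{b}$ --- which is the paper's $\log z\leq z^{1/b}/(\exp(1)/b)$ in disguise, with $\theta=1/b$ playing a slightly different role --- followed by minimizing $e^{bu}/u$ and a numerical check of $2\,(2/(be))^{b-1}\geq\log\log 3$ over $b\in[1,1+1/\sqrt{2}]$. The paper's route is shorter and avoids both the minimization and the numerical verification; yours makes visible exactly which term of the denominator each hypothesis controls and why the constant takes the shape it does. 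Two cosmetic points, neither affecting correctness: to match terms after cross-multiplying you should multiply (a) by $\exp(\gamma)u^{2}$ rather than $\exp(\gamma)u$, and the closing numerical inequality deserves one line of justification (its left side is decreasing in $b$ on the stated interval with minimum roughly $1.10$, comfortably above $\log\log 3\approx 0.094$).
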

\begin{proof}
By Proposition \ref{prop:totientlb}, for all  $n\geq3$ we have 
\[
\frac{\phi(n)}{n} > \frac{\log \log n}{\exp\left(\gamma\right)\left(\log \log n\right)^{2}+3}.
\]
We use the fact that $\log x \leq \frac{x^{\theta}}{\exp(1)\theta}$ for any $\theta>0$ to replace the power of $\log \log n$ in the denominator and conclude that  
\[
\frac{\phi(n)}{n} > \frac{\log \log 3}{\exp\left(\gamma\right)\frac{n^{2\theta^2}}{\left(\exp(1)\theta\right)^{2\theta+2}}+3}.\]
Hence, 
\[
\frac{\phi(n)}{n^{1-2\theta^2}}\geq \frac{\log \log 3}{\frac{\exp\left(\gamma\right)}{\left(\exp(1)\theta\right)^{2\theta+2}}+\frac{3}{n^{2\theta^2}}}\geq \frac{\log \log 3}{\frac{\exp\left(\gamma\right)}{\left(\exp(1)\theta\right)^{2\theta+2}}+\frac{3}{3^{2\theta^2}}},
\] 
which implies that 
\[
\frac{\phi(n)^{1+\frac{2\theta^2}{1-2\theta^2}}}{n}\geq\left( \frac{\log \log 3}{\frac{\exp\left(\gamma\right)}{\left(\exp(1)\theta\right)^{2\theta+2}}+3^{1-2\theta^2}}\right)^{1+\frac{2\theta^2}{1-2\theta^2}}.
\] 
Choosing $\theta$ such that $2\theta^2=\frac{\epsilon}{1+\epsilon}$ completes the proof.
\end{proof}

\remark{Our lemma holds for all $n \geq 3.$ By Mertens' theorem (see, for example, \cite[Theorem 3.15]{Pol}), $\phi(n)/n \sim 1/(\exp(\gamma) \log \log n)$ as $n \rightarrow \infty$. Using this, one can obtain sharper lower bounds for $n$ ``sufficiently large.''}

\section{Some Useful Lemmas}\label{relativeGal}

In this section we prove two lemmas that will be useful in the proof of Theorem~\ref{Thm1}.

\begin{lemma}\label{lemma:2.1}
Let $\FF/\KK$ be a Galois extension. Assume that $\alpha\in \FF^*$ is not a root of unity, let $\alpha_1, \dots, \alpha_{\delta}$ be the conjugates of $\alpha$ over $\KK$, and let $\rho$ be the multiplicative rank of this set of conjugates. Let $e$ be the order of the group of roots of unity in $\FF$, so that $\QQ(\zeta_e)\subset \FF$. 
Then there exists a subfield $\LL$ of $\FF$ which is Galois over $\KK$ of relative degree $[\LL:\KK]= n \leq n(\rho)< 3^{\rho^2}$, and $\alpha^e\in \LL$. 
\end{lemma}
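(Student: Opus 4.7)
The plan is to exploit the Galois action of $G = \text{Gal}(\FF/\KK)$ on the multiplicative group generated by the conjugates of $\alpha$, modulo roots of unity. Since that quotient is free abelian of rank $\rho$, the action factors through a homomorphism $G \to \GL_\rho(\ZZ)$; the kernel cuts out a Galois subextension $\LL/\KK$ whose degree is controlled by Corollary~\ref{cor:serre}, and the cleanup using the exponent $e$ of roots of unity puts $\alpha^e$ inside $\LL$.

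More concretely, I would let $\mu_\FF$ denote the group of roots of unity in $\FF$ (of order $e$) and set $H = \langle \alpha_1, \dots, \alpha_\delta\rangle \subset \FF^*$. Because the $\alpha_i$ are the $\KK$-conjugates of $\alpha$, every $\sigma \in G$ permutes them, so $H$ is $G$-stable. The torsion subgroup of $H$ is $H \cap \mu_\FF$, and by the definition of multiplicative rank the quotient $H/(H \cap \mu_\FF)$ is free abelian of rank $\rho$. The $G$-action preserves $\mu_\FF$, and hence descends to the quotient, yielding a homomorphism
\[
\pi : G \longrightarrow \GL_\rho(\ZZ).
\]
Setting $N = \ker \pi$ and $\LL = \FF^N$, we see that $\LL/\KK$ is Galois because $N$ is normal in $G$, and $\text{Gal}(\LL/\KK) \cong G/N \cong \pi(G)$. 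Since $\pi(G)$ is a finite subgroup of $\GL_\rho(\ZZ)$, Corollary~\ref{cor:serre} gives $|\pi(G)| < 3^{\rho^2}$. Defining $n(\rho)$ to be the maximal order of a finite subgroup of $\GL_\rho(\ZZ)$ then yields $[\LL:\KK] \leq n(\rho) < 3^{\rho^2}$.

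It remains to verify $\alpha^e \in \LL$. For any $\sigma \in N$, the image of each $\alpha_i$ in $H/(H \cap \mu_\FF)$ is fixed, so $\sigma(\alpha_i) = \zeta_i\,\alpha_i$ for some $\zeta_i \in H \cap \mu_\FF \subseteq \mu_\FF$. Since $|\mu_\FF| = e$ we have $\zeta_i^e = 1$, whence $\sigma(\alpha_i^e) = \zeta_i^e\,\alpha_i^e = \alpha_i^e$. In particular $\alpha^e$ is fixed by every $\sigma \in N$, so $\alpha^e \in \FF^N = \LL$. I do not anticipate any genuine obstacle here: once Corollary~\ref{cor:serre} is in hand, the argument is essentially bookkeeping. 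The one conceptual point to keep careful track of is that the torsion of the $G$-module $H$ sits inside $\mu_\FF$, so that a single uniform exponent $e$ trivialises the action of the kernel $N$ not only on $\alpha$ but simultaneously on each of its $\KK$-conjugates.
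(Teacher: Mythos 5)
Your proof is correct and follows essentially the same route as the paper: both arguments rest on the Galois action on the rank-$\rho$ free $\ZZ$-module obtained by killing the roots of unity in the group generated by the conjugates, together with Corollary~\ref{cor:serre}. The only (harmless) difference is in packaging — the paper takes $\LL=\KK(\alpha_1^e,\dots,\alpha_\delta^e)$ and checks that $\mathrm{Gal}(\LL/\KK)$ injects into $\GL_\rho(\ZZ)$, whereas you take $\LL$ to be the fixed field of the kernel of the representation of $\mathrm{Gal}(\FF/\KK)$, which yields a possibly larger field satisfying the same degree bound.
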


\begin{proof}
Let $\beta_i=\alpha_i^e$, and $\LL=\KK(\beta_1, \dots, \beta_{\delta})\subseteq \FF$. Then, by construction, $\LL$ is Galois over $\KK$ and $\alpha^e\in \LL$. Consider 
the multiplicative group \[ \mathcal{M}=\{ \beta_1^{a_1}\cdot \dots \cdot \beta_{\delta}^{a_{\delta}}:a_i\in \ZZ\},\] which  is a $\ZZ$-module that is multiplicatively spanned by $\{\beta_{1},\beta_{2},\dots,\beta_{\delta}\}$.  First, we will show that $\mathcal{M}$ is a free $\ZZ$-module of rank $\rho$. It is enough to show that $\mathcal{M}$ is torsion-free as the fact that $\rho$ is the multiplicative rank of $\{\alpha_1,\dots, \alpha_{\delta}\}$ implies that it is also the multiplicative rank of $\{\beta_1,\dots, \beta_{\delta}\}$. Assume for the sake of contradiction that there exists an $x\in \mathcal{M}$ such that $x\neq 1$ and $x^{n}=1$ for some positive integer $n>1$. Then $x=\beta_1^{a_1}\cdot \dots \cdot \beta_{\delta}^{a_\delta}$ for some $a_{1},\dots,a_{\delta}\in\ZZ$. Since $x^{n}=1$, we get 
\[ \left(\beta_1^{a_1}\cdot \dots \cdot \beta_{\delta}^{a_\delta}\right)^{n}=\left(\alpha_1^{a_1}\cdot \dots \cdot \alpha_{\delta}^{a_{\delta}}\right)^{ne}=1.\]
 Hence, $y=\alpha_1^{a_1}\cdot \dots \cdot \alpha_{\delta}^{a_{\delta}}$ is a root of unity. Since $y\in \FF$ and $e$ is the order of the group of roots of unity in $\FF$, it follows that $x=y^{e}=1$, contrary to our assumption.  

Since $\mathrm{Gal}(\LL/\KK)$ acts on $\mathcal{M}$ by permuting the $\alpha_i$, this action defines an injective homomorphism from $\mathrm{Gal}(\LL/\KK)$ to $\mathrm{GL}_{\rho}(\ZZ)$. This implies that the finite group $\mathrm{Gal}(\LL/\KK)$ is isomorphic to a finite subgroup of $\mathrm{GL}_{\rho}(\ZZ)$. 
By Corollary~\ref{cor:serre} the order of a finite subgroup of $\mathrm{GL}_{\rho}(\ZZ)$ is bounded by $n(\rho)$ which   is at most $3^{\rho^{2}}$. We conclude that $[\LL:\KK]\leq n(\rho)<3^{\rho^{2}}$.
\end{proof}

\begin{lemma}\label{lemma:orderlemma}
Let $\epsilon>0$ be given.  Let $\KK$ be a number field.  Assume that $\alpha$ is a non-zero algebraic number, not a root of unity, such that  $\KK(\alpha)/\KK$ is Galois.  Let $\delta$ be the degree of $\alpha$ over $\KK$. 
Further, let  $e$ be the order of the group of roots of unity in $\KK(\alpha)$, $f$ be the order of the group of roots of unity in $\KK$, $\tau=[\KK:\QQ]$, and let  $\rho$ be the multiplicative rank of the conjugates of $\alpha$ over $\KK$. 
Then
\[ 
[\KK(\alpha):\KK(\zeta_e)] \leq   \delta^{\epsilon}  C_4(\KK,\epsilon),
\]
with $C_4(\KK,\epsilon) = \frac{1}{C(\epsilon)}   n(\rho) f \tau^{1+\epsilon}$. We have $C(\epsilon)$ as in Lemma~\ref{lem:phi} unless $e/f\in\{1,2\}$ in which case we take $C(\epsilon)=1/2$.

\end{lemma}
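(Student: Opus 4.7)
The plan is to write $[\KK(\alpha):\KK(\zeta_e)] = \delta/[\KK(\zeta_e):\KK]$ and control the two factors separately. For the denominator, the tower $\QQ \subseteq \KK \subseteq \KK(\zeta_e)$ gives the standard estimate $[\KK(\zeta_e):\KK] \geq [\QQ(\zeta_e):\QQ]/\tau = \phi(e)/\tau$. For the numerator, I would invoke Lemma~\ref{lemma:2.1} with $\FF = \KK(\alpha)$ (which is Galois over $\KK$ by hypothesis) to produce a Galois intermediate field $\LL \subseteq \KK(\alpha)$ with $\alpha^e \in \LL$ and $[\LL:\KK] \leq n(\rho)$. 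In particular $[\KK(\alpha^e):\KK] \leq n(\rho)$, and since $\alpha$ is a root of $x^e - \alpha^e \in \KK(\alpha^e)[x]$, the tower $\KK \subseteq \KK(\alpha^e) \subseteq \KK(\alpha)$ forces $\delta \leq e \cdot n(\rho)$, i.e.\ $e \geq \delta/n(\rho)$. Combining these estimates yields
\[
[\KK(\alpha):\KK(\zeta_e)] \leq \frac{\delta\tau}{\phi(e)}.
\]

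Since the roots of unity in $\KK$ form a subgroup of those in $\KK(\alpha)$, $f \mid e$, so $e/f$ is a positive integer, and the argument splits into two cases. In the main case $e/f \geq 3$, we have $e \geq 3$, so Lemma~\ref{lem:phi} applies and gives $\phi(e) \geq (C(\epsilon)\,e)^{1/(1+\epsilon)}$. Substituting $e \geq \delta/n(\rho)$ yields
\[
\frac{\delta\tau}{\phi(e)} \leq \delta^{\epsilon/(1+\epsilon)} \,\tau \,n(\rho)^{1/(1+\epsilon)} C(\epsilon)^{-1/(1+\epsilon)}.
\]
The desired bound $\leq \delta^\epsilon C_4(\KK,\epsilon)$ then follows from a sequence of routine simplifications: $\delta^{\epsilon/(1+\epsilon)} \leq \delta^\epsilon$ (since $\delta \geq 1$); a direct inspection of the explicit formula in Lemma~\ref{lem:phi} shows $C(\epsilon) \leq 1$, whence both $C(\epsilon)^{-1/(1+\epsilon)} \leq C(\epsilon)^{-1}$ and $n(\rho)^{1/(1+\epsilon)} \leq n(\rho)$; and $\tau \leq f\tau^{1+\epsilon}$ since $f,\tau \geq 1$.

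In the edge case $e/f \in \{1,2\}$, Lemma~\ref{lem:phi} may fail because $e$ can be as small as $1$ or $2$; instead I would use the substitution $C(\epsilon) = 1/2$ and the trivial bound $[\KK(\alpha):\KK(\zeta_e)] \leq \delta$. Since $e \leq 2f$, the inequality $\delta \leq e\,n(\rho)$ from the first paragraph gives $\delta \leq 2f\,n(\rho) \leq 2f\,n(\rho)\,\tau^{1+\epsilon} = C_4(\KK,\epsilon)$, and hence $\delta \leq \delta^\epsilon C_4(\KK,\epsilon)$ because $\delta^\epsilon \geq 1$.

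The main conceptual input is Lemma~\ref{lemma:2.1}, which supplies the crucial bound $\delta \leq e\,n(\rho)$; everything else is careful bookkeeping to reconcile the fractional $1/(1+\epsilon)$ exponents produced by Lemma~\ref{lem:phi} with the desired $\delta^\epsilon$ form, and to choose the case split so that the small-$e$ pathology (where Lemma~\ref{lem:phi} has no content) is handled by a direct, trivial estimate.
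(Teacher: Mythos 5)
Your proof is correct. It rests on the same two pillars as the paper's argument --- Lemma~\ref{lemma:2.1} to control the degree via $n(\rho)$ and $e$, and Lemma~\ref{lem:phi} to trade $\phi$ for an $\epsilon$-power --- but the bookkeeping is organized differently, and in a somewhat cleaner way. The paper runs the tower through $\LL$ and $\LL(\zeta_e)$ to derive the identity $[\KK(\alpha):\KK(\zeta_e)] = e'\,[\LL:\KK]/[\KK(\zeta_e):\KK]$ with $e'=[\LL(\alpha):\LL]\leq e$; since $\delta = e'[\LL:\KK]$, this is exactly your inequality $\delta \leq e\,n(\rho)$ paired with the quotient $\delta/[\KK(\zeta_e):\KK]$, so the two decompositions are equivalent. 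Where you genuinely diverge is in the lower bound for $[\KK(\zeta_e):\KK]$: the paper computes it via the second isomorphism theorem as $\phi(e/f)\,\phi(f)/\tau$ and then applies Lemma~\ref{lem:phi} to $e/f$ (which is why the statement's exceptional case is phrased in terms of $e/f\in\{1,2\}$), whereas you use the simpler bound $\phi(e)/\tau$ and apply Lemma~\ref{lem:phi} to $e$ itself, absorbing the resulting exponents $1/(1+\epsilon)$ by the crude estimates $C(\epsilon)\leq 1$, $n(\rho)\geq 1$, $f\geq 1$. Your route avoids the isomorphism-theorem computation entirely and still lands on the stated constant $C_4(\KK,\epsilon)$; the paper's finer splitting of $e$ as $(e/f)\cdot f$ is what naturally produces the factor $f$ in $C_4$, which in your version enters only as a throwaway $f\geq 1$. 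Your handling of the $e/f\in\{1,2\}$ case via the trivial bound $[\KK(\alpha):\KK(\zeta_e)]\leq \delta\leq e\,n(\rho)\leq 2f\,n(\rho)$ is also valid and matches the stated choice $C(\epsilon)=1/2$.
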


\begin{proof}
We begin by obtaining a few inequalities, proving \eqref{eq:degreephi} and \eqref{eq:phibounds}  below. 
By the second isomorphism theorem, we have 
\begin{align*} 
 [\KK(\zeta_e):\KK] & =[\QQ(\zeta_e):\KK\cap \QQ(\zeta_e)] =
 \frac{[\QQ(\zeta_e):\QQ(\zeta_f)]}{[\KK\cap \QQ(\zeta_e):\QQ(\zeta_f)]} \\
& =\frac{\phi(e)/\phi(f)}{[\KK\cap \QQ(\zeta_e):\QQ(\zeta_f)]}.
\end{align*}
Since $[\KK\cap \QQ(\zeta_e):\QQ(\zeta_f)]\leq [\KK:\QQ(\zeta_f)]$, we conclude that
\[ 
 [\KK(\zeta_e):\KK]  \geq \frac{\phi(e)/\phi(f)}{[\KK:\QQ(\zeta_f)]}.
\]
It follows from the fact that $\phi$ is multiplicative that
\begin{equation}\label{eq:degreephi}
 [\KK(\zeta_e):\KK]\geq \frac{\phi(e)/\phi(f)}{[\KK:\QQ(\zeta_f)]}  \geq \frac{ \phi(\tfrac{e}{f})}{[\KK:\QQ(\zeta_f)]}.  
 \end{equation}

Next, we will show   
\begin{equation}\label{eq:phibounds}
\frac{\tfrac{e}{f}}{\phi(\tfrac{e}{f})}\leq \frac{[\KK:\QQ(\zeta_{f})]^{\epsilon}}{C(\epsilon)}\delta^{\epsilon}.
\end{equation}
By Lemma \ref{lem:phi}, if $e/f\geq 3$ we have the upper bound 
\[ \frac{\tfrac{e}{f}}{\phi (\tfrac{e}{f})}\leq \frac{1}{C(\epsilon)}\left(\phi(\tfrac{e}{f})\right)^{\epsilon}.\] 
If $e/f \in\{1,2\}$, we can take $C(\epsilon) = 1/2$ and this is still satisfied. 
Again appealing to the multiplicativity of $\phi$, we have 
\[\phi(\tfrac{e}{f})\leq \frac{\phi(e)}{\phi(f)}=[\QQ(\zeta_{e}):\QQ(\zeta_{f})]\leq [\KK(\zeta_{e}):\KK][\KK:\QQ(\zeta_{f})]\leq \delta[\KK:\QQ(\zeta_{f})].\]
Hence, \eqref{eq:phibounds} follows by combining these two inequalities.

Now we will proceed to prove the bound for $[\KK(\alpha):\KK(\zeta_e)]$ stated in the lemma.
By Lemma~\ref{lemma:2.1} there is a subfield $\LL$ of $\KK(\alpha)$ which is Galois over $\KK$, contains $\alpha^e$ and \begin{align}\label{eq:degreen(rho)} [\LL:\KK]=n\leq n(\rho).\end{align} Thus, we have $\KK \subseteq \LL \subseteq \KK(\alpha),$ so $\KK(\alpha) = \LL(\alpha).$

\[\begin{tikzpicture}[node distance = 1.5cm, auto]
      \node (Q) {$\QQ$};
      \node (K) [above of=Q] {$\KK$};
      \node (L) [above of=K, left of=K] {$\LL$};
      \node (Kzeta) [above of=K, right of=K] {$\KK(\zeta_{e})$};
      \node (Lzeta) [above of=K, node distance = 3cm] {$\LL(\zeta_{e})$};
      \node (Lalpha) [above of=Lzeta] {$\LL(\alpha)=\KK(\alpha)$};
      \draw[-] (Q) to node {} (K);
      \draw[-] (K) to node {} (L);
      \draw[-] (K) to node {} (Kzeta);
      \draw[-] (Kzeta) to node {} (Lzeta);
      \draw[-] (L) to node {} (Lzeta);
      \draw[-] (Lzeta) to node {} (Lalpha);
      \end{tikzpicture}\]

Let $e'=[\LL(\alpha):\LL]$. Since the minimal polynomial for $\alpha$ over $\LL$ divides  $x^e-\alpha^e$, we conclude that $e'\leq e$.
Using multiple applications of the tower law, we have
\begin{align*} 
[\KK(\alpha):\KK(\zeta_e)] & =[\LL(\alpha):\LL(\zeta_e)][\LL(\zeta_e):\KK(\zeta_e)]\\
& = [\LL(\zeta_e):\KK(\zeta_e)]\frac{[\LL(\alpha):\LL]}{[\LL(\zeta_e):\LL]} \\
& =e'  \frac{[\LL(\zeta_e):\KK(\zeta_e)]}{[\LL(\zeta_e):\LL]} =
e' \frac{[\LL:\KK]}{[\KK(\zeta_e):\KK]}. 
\end{align*}
By \eqref{eq:degreen(rho)}, we see that
\[
[\KK(\alpha):\KK(\zeta_e)] \leq e' \frac{n(\rho)}{[\KK(\zeta_e):\KK]} .
\]
Using   \eqref{eq:degreephi} we have 
\[ [\KK(\alpha):\KK(\zeta_e)] \leq  e' \frac{n(\rho)}{[\KK(\zeta_e):\KK]} \leq \frac{e'}{  \phi(\tfrac{e}{f})} n(\rho)[\KK:\QQ(\zeta_f)].\]
Since $e'\leq e$, we conclude that $e' \leq \frac{e}{f} f$, and hence
\[
[\KK(\alpha):\KK(\zeta_e)] \leq   \frac{ \tfrac{e}{f}}{  \phi(\tfrac{e}{f})}   n(\rho) f [\KK:\QQ(\zeta_f)].
\]
Combining this  bound with \eqref{eq:phibounds} shows that
\[ 
[\KK(\alpha):\KK(\zeta_e)] \leq   \frac{1}{C(\epsilon)}\delta^{\epsilon}  n(\rho) f [\KK:\QQ(\zeta_{f})]^{1+\epsilon}\leq   \delta^{\epsilon}  C_4(\KK,\epsilon)
\]
with $C_4(\KK,\epsilon) = \frac{1}{C(\epsilon)}   n(\rho) f \tau^{1+\epsilon}$, as needed.

\end{proof}

\section{Proof Of Theorem 1}

In this section, we present the proof of Theorem \ref{Thm1}, which generalizes Theorem 3.3 of \cite{AmMa}.

\begin{proof}
Let $\epsilon>0$ be given,  let $r$ be the smallest integer greater than $1/\epsilon$, and let $\tau=[\KK:\QQ]$. 
First consider the case when $r>\delta$, so that  
\[ \delta\leq  r-1\leq \tfrac{1}{\epsilon}<r.\] 
We will show that $h(\alpha) \gg_{\tau,\epsilon}1$. For $d\geq2$, using equation \eqref{Vouineq1}, we obtain
\[ h(\alpha) \geq \frac{2}{d (\log 3d)^3}. \]
The function $f_1(x)=\frac{2}{x(\log 3x)^3}$ is decreasing for $x\geq 1$.  
Since $\QQ(\alpha)\subset \KK(\alpha)$ we have  
\[ d=[\QQ(\alpha):\QQ]\leq [\KK(\alpha):\QQ]=[\KK(\alpha):\KK][\KK:\QQ]=\delta \tau\leq \tau/\epsilon.\]
Therefore,
\[ h(\alpha) \geq f_1(\tau/\epsilon)=  \frac{2}{\tau/\epsilon (\log 3 \tau/\epsilon)^3}.\]

We can often improve upon this lower bound. Using equation \eqref{Vouineq} for $d\geq 2$ yields
\[ h(\alpha) \geq \frac1{4d} \Big(   \frac{\log \log d}{\log d}  \Big)^3.\] Let $g_1(x)= \frac{1}{4x}\big(\frac{\log \log x}{\log x}\big)^3$. The function $g_1(x)$ is positive for all $x\geq 3$ and decreasing for $x\geq 7$. For  $x=3,4,5,6$, we see that $g_1(x)$ achieves its minimum at $g_1(3)=0.00005227953369\dots$.  For $d\geq 7$, since $d\leq \tau/\epsilon$, we have
\[ h(\alpha) \geq  g_1(\tau/\epsilon).\]
There exists $a\in(184,185)$ such that for $x\leq a$ we have $f_1(x) > g_1(x)$, but for $x\geq a$, $g_1(x)>f_1(x)$. (In fact, $a=184.615\dots$.) We also note that $f_1(6)>g_1(3)$.
We conclude that when $r>\delta$, we have $h(\alpha)\geq C_1(\epsilon,\tau)$, where
\begin{align*}
C_1(\epsilon,\tau)&=
\begin{cases}
f_1(6), &\text{if } 3\leq d\leq 6 \text{ and } \tau/\epsilon<6, \\
f_1(\tau/\epsilon), &\text{if } 3\leq d\leq 6 \text{ and } \tau/\epsilon\geq 6,\text{ or if }  d\geq 7 \text{ and } \tau/\epsilon\leq a,\\
g_1(\tau/\epsilon), &\text{if } d\geq 7 \text{ and } \tau/\epsilon\geq a.\\
\end{cases}
\end{align*}
For $d=2$, we can use $C_1=f_1(2)$, and for $d=1$ we can use $C_1= \log 2$.

We may now assume that $r\leq \delta$.  Let $\rho$ be the multiplicative rank of  the conjugates of $\alpha$ over $\KK$.

First, consider the case when $\rho\geq r$. (That is, $r$ of the conjugates of $\alpha$ over $\KK$ are multiplicatively independent.)
By Proposition~\ref{AmVi1.6}, with $D=[\KK(\alpha):\QQ]=\delta \tau$, we have
\[ h(\alpha)^r \geq (\delta \tau)^{-1} (1050r^5\log(3 \delta \tau))^{-r^2(r+1)^2}\] 
using the fact that  the Weil heights of conjugate algebraic numbers are equal.
Since $r>\tfrac1{\epsilon}$, it follows that $-\tfrac1r>-\epsilon$, so $(\delta\tau)^{-\frac1r}> (\delta\tau)^{-\epsilon}$. 
Therefore, upon taking the $r^{\rm{ th}}$ roots, with $f_2(\delta, \tau,r)=(1050r^5\log(3 \delta \tau))^{-r(r+1)^2}$ we have
\begin{align}\label{eq:halphalb1} h(\alpha) > \delta^{-\epsilon}\tau^{-\epsilon}f_2(\delta, \tau,r).\end{align}
Since $r-1\leq \tfrac1{\epsilon}$, it follows that $r\leq 1+\tfrac1{\epsilon}$ so 
\[  f_2(\delta, \tau,r) \geq 
  \Big(1050(1+\tfrac1{\epsilon})^5\log(3 \delta \tau)\Big)^{-(1+\tfrac1{\epsilon})(2+\tfrac1{\epsilon})^2}.
  \]
  Using the inequality $\log(x)\leq \tfrac{1}{\epsilon_1\exp(1)} x^{\epsilon_1}$, which holds for any $\epsilon_1>0$, we see that
 \[ f_2(\delta, \tau,r) \geq \Big(1050(1+\tfrac1{\epsilon})^5 \tfrac{1}{\epsilon_{1}\exp(1)} (3\delta \tau )^{\epsilon_{1}} \Big)^{-(1+\tfrac1{\epsilon})(2+\tfrac1{\epsilon})^2}.
  \]
  Taking $\epsilon_1= \epsilon/(1+\tfrac1{\epsilon})(2+\tfrac1{\epsilon})^2$ we have 
  \[
   f_2(\delta, \tau,r) \geq  \Big( \frac{1050}{ \epsilon \exp(1)}(1+\tfrac1{\epsilon})^6  (2+\tfrac1{\epsilon})^2  \Big)^{-(1+\tfrac1{\epsilon})(2+\tfrac1{\epsilon})^2} (3 \delta \tau)^{-\epsilon}.
  \]
  We conclude, from \eqref{eq:halphalb1}, that $h(\alpha) \geq C_2(\epsilon,\tau) \delta^{-2\epsilon}$,
where 
\[ C_2(\epsilon,\tau)=\tau^{- 2\epsilon } 3^{-\epsilon}  \Big( \frac{1050}{ \epsilon \exp(1)}(1+\tfrac1{\epsilon})^6  (2+\tfrac1{\epsilon})^2  \Big)^{-(1+\tfrac1{\epsilon})(2+\tfrac1{\epsilon})^2}.\]

Now we may assume that $r\leq \delta $ and $\rho\leq r-1$. First, let us establish some notation. Let $e$ be the order of the group of roots of unity in $\KK(\alpha)$, let $f$ be the order of the group of roots of unity in $\KK$, and let $D=[\KK(\alpha):\KK(\zeta_e)]$.
By  Proposition~\ref{AmDel1.3}, taking $\AAA = \KK(\zeta_e)$ and $\BB=\KK$, we conclude that there is an absolute positive constant $c$ such that 
\[
h(\alpha) \geq   \frac{(g(\tau)\Delta)^{-c}}{D} \frac{\left(\log \log 5D\right)^3}{\left(\log 2D\right)^4},
\]
where $\Delta $ is the absolute value of the discriminant of $\KK$ over $\QQ$ and $g(\tau)=1$ if there exists a tower of successive Galois extensions 
$ \QQ=\KK_0\subset \KK_1\subset \dots \subset \KK_m=\KK$, and $g(\tau)=\tau!$ otherwise.

Notice that the function $f(x) = \frac{1}{x} \frac{\left(\log \log 5x\right)^3}{\left(\log 2x\right)^4}$ is decreasing for all $x\geq 1$.
By Lemma~\ref{lemma:orderlemma} we have 
\[ 
D=[\KK(\alpha):\KK(\zeta_e)] \leq    \delta^{\epsilon}  C_4(\KK,\epsilon)
\]
with $C_4(\KK,\epsilon) = \frac{1}{C(\epsilon)}   n(\rho) f \tau^{1+\epsilon}$.
Therefore,
\[ h(\alpha) \geq  \frac{(g(\tau)\Delta)^{-c}}{C_4(\KK,\epsilon) \delta^{\epsilon} } \frac{\big(\log \log (5C_4(\KK,\epsilon)\delta^{\epsilon})\big)^3}{\big(\log(2C_4(\KK,\epsilon)\delta^{\epsilon} )\big)^4}. \] 
It remains to show that this is $\gg_{\rho,\KK} \delta^{-2\epsilon}$.  The constant $C(\epsilon)$ from  Lemma~\ref{lem:phi} is easily seen to be positive and less than 1, which implies that $C_4(\KK,\epsilon)\geq 1$. Moreover, for all $y \geq 1$, we have
 
\[ \frac{ y\left(\log \log 5 y \right)^3}{\left(\log 2 y \right)^4} \geq \frac1{4}. \]
 Since $C_4(\KK,\epsilon)\delta^{\epsilon}\geq 1$, we conclude that
\[
 \frac{\left(\log \log ( 5 C_4(\KK,\epsilon) \delta^{\epsilon})\right)^3}{\left(\log(2 C_4(\KK,\epsilon) \delta^{\epsilon}  )\right)^4} \geq \frac1{4 C_4(\KK,\epsilon) \delta^{\epsilon}}. 
\]
We have shown that 
\[h(\alpha) \geq \delta^{-2\epsilon} C_5(\KK,\epsilon), \]
where
$C_5(\KK,\epsilon)= \frac{(g(\tau)\Delta)^{-c} C(\epsilon)^2}{4(n(\rho) f \tau^{1+\epsilon})^2}$
and $C(\epsilon)$ is the constant from Lemma~\ref{lem:phi}.
Since we are assuming that $\rho\leq r-1<1/\epsilon$, then $n(\rho)< 3^{\rho^2}< 3^{(1/\epsilon)^2}$. Thus, we have
\[h(\alpha) \geq \delta^{-2\epsilon} C_3(\KK,\epsilon), \]
where
$C_3(\KK,\epsilon)= \frac{(g(\tau)\Delta)^{-c} C(\epsilon)^2}{4(3^{(1/\epsilon)^2} f \tau^{1+\epsilon})^2}$.

\end{proof}

\begin{remark} Recall that, if $\QQ(\alpha)/\KK$ is Galois, then since $\KK\subset \QQ(\alpha)$, it follows that $\KK(\alpha)=\QQ(\alpha)$.  Therefore,  this theorem also applies to the case where $\QQ(\alpha)/\KK$ is Galois. \end{remark}

\section{Proof of Theorem~\ref{Thm2}}\label{ExplicitSection}

We prove Theorem~\ref{Thm2} below.

\begin{proof}
Let $\alpha_{1},\dots,\alpha_{\delta}$ be the conjugates of $\alpha$ over $\KK$, and let $\rho$ be their multiplicative rank. 
As a result, $\delta \leq \eta$.
Since we assume that $r$ conjugates of $\alpha$ over $\KK$ are multiplicatively independent, we know that $\rho\geq r$.\\

{\bf{Case 1 $(\rho>r)$:}}  If the multiplicative rank of the conjugates of $\alpha$ over $\KK$ is strictly larger than $r$, we know that there exists a subset
\[ 
\{\alpha_{i_{1}},\alpha_{i_{2}},\dots,\alpha_{i_{r+1}}\}\subset\{\alpha_{1},\alpha_{2},\dots,\alpha_{\delta}\}
\]
 such that $\alpha_{i_{1}},\alpha_{i_{2}},\dots,\alpha_{i_{r+1}}$ are distinct and multiplicatively independent. By Proposition~\ref{AmVi1.6},
\[
h(\alpha_{i_{1}})h(\alpha_{i_{2}})\cdots h(\alpha_{i_{r+1}})\geq D^{-1}\left(1050(r+1)^{5}\log(3D)\right)^{-(r+1)^2(r+2)^2},
\] 
where $D=[\QQ(\alpha_{i_{1}},\dots,\alpha_{i_{r+1}}):\QQ]$. Since the $\alpha_i$ are all conjugates, they all have the same height, so the left hand side of this inequality is $h(\alpha)^{r+1}$. In addition, 
\[ D=[\QQ(\alpha_{i_{1}},\dots,\alpha_{i_{r+1}}):\QQ]\leq [\FF:\QQ]= [\FF:\KK][\KK:\QQ]=\eta\tau.
\] 
Upon taking $(r+1)^{\rm{st}}$ roots, it follows that 
\[h(\alpha) \geq \tau^{-\frac{1}{r+1}} \eta^{-\frac{1}{r+1}}\left(1050(r+1)^{5}\log(3\tau\eta)\right)^{-(r+1)(r+2)^2}.\]
Recall that $\log x\leq \frac{1}{\epsilon_{1} \exp(1)}x^{\epsilon_{1}}$ for any $\epsilon_{1}>0$. By applying this inequality with $\epsilon_{1}=\frac{\epsilon}{(r+1)(r+2)^2}$, we get an explicit lower bound for $h(\alpha)$ in the desired form,
\[
h(\alpha) \geq C_1(\epsilon, r, \tau) \eta^{-\frac{1}{r+1}-\epsilon},
\]
where 
\[
C_1(\epsilon, r, \tau)=3^{-\epsilon}\left(\frac{1050(r+1)^6(r+2)^2}{\epsilon\exp(1)}\right)^{-(r+1)(r+2)^2}\tau^{-\frac{1}{r+1}-\epsilon}.
\]

\vspace{0.3cm}

{\bf{Case 2 $(\rho= r)$:}} Let $\alpha_{i_{1}},\alpha_{i_{2}},\dots,\alpha_{i_{r}}$ be multiplicatively independent conjugates of $\alpha$ over $\KK$. We denote by $e$ the order of the group of roots of unity in $\FF$ so that $\QQ(\zeta_{e})\subset \FF$. By Lemma \ref{lemma:2.1} we know that there exists a  subfield $\LL$ of $\FF$ which is Galois over $\KK$ such that $\alpha^{e}\in \LL$ and $[\LL:\KK]=n\leq n(r)< 3^{r^2}$. 
By \eqref{Vouineq}, we have 
\[ h(\alpha^{e})\geq \frac{1}{4[\QQ(\alpha^{e}):\QQ]}\left(\frac{\log\log([\QQ(\alpha^{e}):\QQ])}{\log([\QQ(\alpha^{e}):\QQ])}\right)^3,\]
provided $\alpha^e\not\in\QQ$. Now, 
\[[\QQ(\alpha^{e}):\QQ]\leq [\LL:\QQ]=[\LL:\KK][\KK:\QQ]\leq n(r)\tau.\] As in the proof of Theorem \ref{Thm1}, we can use the properties of the function $g_1(x)$ previously defined to obtain 
\begin{equation}\label{eqn:3.1}
h(\alpha)=\frac{1}{e}h(\alpha^{e})\geq\frac{1}{e}\frac{1}{4n(r)\tau}\left(\frac{\log\log(n(r)\tau)}{\log(n(r)\tau)}\right)^3
\end{equation}
whenever $[\QQ(\alpha^{e}):\QQ]\geq 7$. When $n(r)\tau\leq 184$, this can be improved by using $f_1(x)$ in place of $g_1(x)$, as before, and similarly, we use $f_1(x)$ when $3\leq [\QQ(\alpha^{e}):\QQ]\leq 6$. For $[\QQ(\alpha^{e}):\QQ]=2$, we have $h(\alpha)\geq\frac{1}{e}f_1(2)$, and for $[\QQ(\alpha^{e}):\QQ]=1$ we have $h(\alpha)\geq\frac{1}{e} \log 2$. For the remainder of the proof, we focus on the case given in equation \eqref{eqn:3.1}, and trust the reader to make the appropriate substitutions.

On the other hand, Corollary~\ref{cor:Del1.6} 
implies that with $\boldsymbol{\alpha}=(\alpha_{i_{1}},\dots,\alpha_{i_{r}})$ we have
\[
h(\alpha)^{r}> \bigg( c_{2}(r)[\QQ^{\mathrm{ab}}(\boldsymbol{\alpha}):\QQ^{\mathrm{ab}}]\Big(\log\big(3[\QQ^{\mathrm{ab}}(\boldsymbol{\alpha}):\QQ^{\mathrm{ab}}]\big)\Big)^{\kappa_{2}(r)}\bigg)^{-1},
\] 
where $\kappa_{2}(r)=3r\left(2(r+1)^2(r+1)!\right)^{r}$ and 
\[c_{2}(r)=(2r^{2})^{r}\exp\left(64r^{2}r!\left(2(r+1)^{2}(r+1)!\right)^{2r}\right).\]
Using the bound 
 \[ [\QQ^{\mathrm{ab}}(\boldsymbol{\alpha}):\QQ^{\mathrm{ab}}]\leq [\QQ(\zeta_{e})(\boldsymbol{\alpha}):\QQ(\zeta_{e})]\leq[\FF:\QQ(\zeta_{e})]=\frac{[\FF:\QQ]}{[\QQ(\zeta_{e}):\QQ]}=\frac{\tau\eta}{\phi(e)},\]
we conclude that
\begin{equation}\label{eqn:3.2}
h(\alpha)^{r}> \bigg( c_{2}(r)\frac{\tau\eta}{\phi(e)}\Big(\log\big(3\frac{\tau\eta}{\phi(e)}\big)\Big)^{\kappa_{2}(r)}\bigg)^{-1}.
\end{equation}

Combining equations \eqref{eqn:3.1} and \eqref{eqn:3.2} yields
\[
h(\alpha)^{r+1}
>
C_1(r,\tau,e) \eta^{-1}\left(\log(3\tfrac{\tau\eta}{\phi(e)})\right)^{-\kappa_{2}(r)},
\]
where $C_1(r,\tau,e) = c_{2}(r)^{-1}\big(\frac{\log\log(n(r)\tau)}{\log(n(r)\tau)}\big)^3
\frac{1}{4n(r)\tau^2}\frac{\phi(e)}{e}.$
We now apply the inequality $\log x\leq \frac{1}{\epsilon_{1} \exp(1)}x^{\epsilon_{1}}$  with $\epsilon_{1}=\epsilon/\kappa_{2}(r)$ and $x=3\tfrac{\tau\eta}{\phi(e)}$ and conclude that 
\[
h(\alpha)^{r+1}
>
C_1(r,\tau,e) \eta^{-1} \Big(  \frac{\kappa_2(r)}{\epsilon \exp(1)} \Big)^{-\kappa_2(r)} \Big(3\frac{\tau\eta}{\phi(e)}\Big)^{-\epsilon}.
\]
This simplifies to 
\[
h(\alpha)^{r+1}
>
C_2(\epsilon,r,\tau,e) \eta^{-1-\epsilon} 
\]
with $C_2(\epsilon,r,\tau,e)=\big(\frac{\log\log(n(r)\tau)}{\log(n(r)\tau)}\big)^3
\Big(  \frac{\kappa_2(r)}{\epsilon \exp(1)} \Big)^{-\kappa_2(r)}
\frac{\phi(e)^{1+\epsilon}/e}{4n(r) c_{2}(r) 3^{\epsilon} \tau^{2+\epsilon}}.$
By Lemma \ref{lem:phi}, $\phi(e)^{1+\epsilon}/e \geq C(\epsilon)$, so that 
we can replace $C_2(\epsilon,r,\tau,e)$ in the inequality by 
\[ C_3(\epsilon,r,\tau) = \Big(\frac{\log\log(n(r)\tau)}{\log(n(r)\tau)}\Big)^3
\Big(  \frac{\kappa_2(r)}{\epsilon \exp(1)} \Big)^{-\kappa_2(r)}
\frac{C(\epsilon)}{4n(r) c_{2}(r) 3^{\epsilon} \tau^{2+\epsilon}},\]
and upon taking $(r+1)^{\rm{st}}$ roots we have 
\[
h(\alpha)
>
C_3(\epsilon,r,\tau)^{\frac{1}{r+1}} \eta^{- \frac{1}{r+1}-\frac{\epsilon}{r+1}} .
\]
Using the fact that $\eta^{-\epsilon/(r+1)}\geq\eta^{-\epsilon}$ we get the bound 
\[
h(\alpha)
>
C_3(\epsilon,r,\tau)^{\frac{1}{r+1}} \eta^{- \frac{1}{r+1}-{\epsilon}} .
\]

\end{proof}

\section*{Acknowledgements}

This work began as a research project for the working group {\em Heights of Algebraic Integers} at the Women in Numbers Europe 2 workshop held at the Lorentz Center at the University of Leiden.  The authors would like to thank the organisers of the workshop, and the Lorentz Center for their hospitality.

Research of Shabnam Akhtari is supported by the NSF grant DMS-1601837. Kirsti Biggs is supported by an EPSRC Doctoral Training Partnership. Research of Alia Hamieh is partially supported by a PIMS postdoctoral fellowship. Research of Kathleen Petersen is supported by Simons Foundation Collaboration grant number 209226 and 430077; she would like to thank the Tata Institute of Fundamental Research for their hospitality while preparing this manuscript.  Lola Thompson is supported by an AMS Simons Travel Grant, by a Max Planck Institute fellowship during the Fall 2016 semester, and by the NSF grant DMS-1440140 while in residence at the Mathematical Sciences Research Institute during the Spring 2017 semester.

\end{document}